\newcommand{\be}{\begin{eqnarray}}
\newcommand{\ee}{\end{eqnarray}}
\newcommand{\RR}{\mathbb{R}}
\newcommand{\HH}{\mathbb{H}}
\theoremstyle{plain}
\newtheorem{theorem}{\textbf{Theorem}}[section]
\newtheorem{lemma}[theorem]{\textbf{Lemma}}
\newtheorem{proposition}[theorem]{\textbf{Proposition}}
\newtheorem{cor}[theorem]{\textbf{Corollary}}
\newtheorem{rem}[theorem]{\textbf{Remark}}
\theoremstyle{remark}
\newtheorem{ackn}{Acknowledgments\!\!}
\newcommand{\ricc}{\mathrm{Ric}}
\newcommand{\vol}{\mathrm{Vol}}
\newcommand{\laess}{\lambda_{1}^{\operatorname{ess}}}
\title[The Poisson equation on manifolds]
{The Poisson equation on manifolds\\ with positive essential spectrum}
\author[G. Catino]{Giovanni Catino}
\address[Giovanni Catino]{Dipartimento di Matematica, Politecnico di Milano, Piazza Leonardo da Vinci 32, 20133 Milano, Italy}
\email[]{giovanni.catino@polimi.it}
\author[D. D. Monticelli]{Dario D. Monticelli}
\address[Dario Monticelli]{Dipartimento di Matematica, Politecnico di Milano, Piazza Leonardo da Vinci 32, 20133 Milano, Italy}
\email[]{dario.monticelli@polimi.it}
\author[F. Punzo]{Fabio Punzo}
\address[Fabio Punzo]{Dipartimento di Matematica, Politecnico di Milano, Piazza Leonardo da Vinci 32, 20133 Milano, Italy}
\email[]{fabio.punzo@polimi.it}
\keywords{Poisson equation, Riemannian manifolds, Green's functions, essential spectrum}
\subjclass[2010]{53C21, 35R01.}
\begin{document}

\begin{abstract}
We show existence of solutions to the Poisson equation on Riemannian manifolds with positive essential spectrum, assuming a sharp pointwise decay on the source function. In particular we can allow the Ricci curvature to be unbounded from below. In comparison with previous works, we can deal with a more general setting both on the spectrum and on the curvature bounds.
\end{abstract}

\maketitle

\section{Introduction}

In this paper we investigate existence of classical solutions to the Poisson equation
$$
-\Delta u = f
$$
on a complete non-compact Riemannian manifold $(M, g)$ for a given locally H\"older continuous function $f$. This very classical problem has been extensively studied under various assumptions on the geometry of the manifold, assuming either integral or pointwise conditions on the source function $f$.

As in \cite{ms}, \cite{msw}, \cite{nst}, we will be always concerned with solutions $u$ of the Poisson equation $-\Delta u=f$ which can be represented as
$$
u(x)=\int_{M} G(x,y)f(y)\,dy\,,
$$
where $G(x,y)$ is a Green's function on $M$ (see Section \ref{sect-prel} for further details).

In our results, the only geometric assumption that we require is that $(M,g)$ has {\em positive essential spectrum}, i.e. $\laess(M)>0$. We recall that this condition is weaker than the positivity of the whole $L^{2}$-spectrum of $-\Delta$, i.e. $\lambda_{1}(M)>0$, and it is not related to the {\em non-parabolicity} of the manifold, i.e. to the existence of a minimal positive Green's function $G(x,y)$ for $-\Delta$ (see Section \ref{sect-prel} for precise definitions). Moreover, we assume that the function $f$ satisfies a pointwise decay condition related to the geometry of the manifold at infinity. In addition, we show that such decay condition is optimal on spherically symmetric manifolds.

Concerning previous results in the literature, Malgrange \cite{mal} showed the existence of a Green's function for $-\Delta$ on every complete Riemannian manifolds, which implies the solvability of the Poisson equation for any smooth function $f$ with compact support (see \cite{tay} for a direct proof of this fact using pseudo-differential calculus).
Strichartz \cite{str} showed that if $(M,g)$ has positive spectrum and $f$ is a function belonging to $L^{p}(M)$ for some $1<p<\infty$, then the Poisson problem has a weak solution. The case $p=1$ was essentially proved by Ni-Shi-Tam \cite[Theorem 3.2]{nst} (see also \cite[Lemma 2.3]{ni}) only assuming $(M,g)$ non-parabolic. In the same paper, Ni-Shi-Tam proved a very nice existence result for the Poisson problem on manifolds with non-negative Ricci curvature under a sharp integral assumption involving suitable averages of $f$. This condition, in particular, is satisfied if
$$
|f(x)|\leq \frac{C}{\big(1+r(x)\big)^{2+\varepsilon}}
$$
for some $C,\varepsilon>0$, where $r(x):=\operatorname{dist}(x,p)$ is the distance function of any $x\in M$ from a fixed reference point $p\in M$. In fact, they proved a more general result where the decay rate of $f$ is just assumed to be of order $1+\varepsilon$. Note that this result is sharp on the flat space $\RR^{n}$ and that, in view of the nonnegative Ricci curvature assumption, $\lambda_{1}(M)=0$.


In \cite{ms} Muntenau-Sesum showed a very interesting existence result on manifolds with positive spectrum and Ricci curvature bounded from below, under the pointwise decay assumption
$$
|f(x)|\leq \frac{C}{\big(1+r(x)\big)^{1+\varepsilon}}
$$
for some $C,\varepsilon>0$. Note that this result is sharp on $\HH^{n}$. Their proof relies on very precise integral estimates on the minimal positive Green's function, which exists since $\lambda_{1}(M)>0$.

In order to state our results, let $\theta(R)$ be defined as in \eqref{eq127} (see Section \ref{sect-prel} for further details). For the moment we just note that such a function is related to a lower bound on the Ricci curvature, locally on geodesic balls with center $p$ and radius $R$.

The main result of the paper reads as follows.

\begin{theorem} \label{teo1} Let $(M,g)$ be a complete non-compact Riemannian manifold with positive essential spectrum, i.e. $\laess(M)>0$, and let $f$ be a locally H\"older continuous function on $M$ satisfying
\begin{equation*}
| f (x) | \leq \frac{1}{\zeta\left(r(x)\right)} \quad\hbox{for all  } x\in M,
\end{equation*}
for some non-decreasing function $\zeta\in C^{0}([0,\infty);(0,\infty))$. If
$$
\sum_j^{\infty}\frac{\theta(j+1)-\theta(j)}{\lambda_{1}\left(M\setminus B_{j-1}(p)\right)\zeta(j-1)} < \infty,
$$
then the Poisson equation
\begin{equation*}
-\Delta u=f \quad\hbox{in   } M
\end{equation*}
admits a classical solution $u$.
\end{theorem}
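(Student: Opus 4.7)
The natural strategy is a Dirichlet exhaustion: for each $j$, let $u_j\in C^{2,\alpha}(\overline{B_j(p)})$ solve $-\Delta u_j=f$ in $B_j(p)$ with $u_j=0$ on $\partial B_j(p)$, which exists by standard elliptic theory since $f$ is locally H\"older continuous. By interior Schauder estimates and Arzel\`a--Ascoli, once I have a uniform bound $\|u_j\|_{L^\infty(B_r(p))}\le C(r)$ for every fixed $r>0$, a diagonal subsequence converges in $C^2_{\mathrm{loc}}(M)$ to a classical solution $u$ of $-\Delta u=f$ on $M$. The uniform bound will follow from the weak maximum principle provided I can construct a nonnegative, continuous, locally bounded function $w\colon M\to[0,\infty)$ with $-\Delta w\ge|f|$ on $M$: then $w\pm u_j\ge 0$ on $\partial B_j(p)$ and $-\Delta(w\pm u_j)\ge 0$ in $B_j(p)$, so $|u_j|\le w$ in $B_j(p)$.

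The construction of $w$ is the crux of the argument, and it is where all three hypotheses intertwine. I would decompose annularly $|f|\le\sum_{k\ge 1}\zeta(k-1)^{-1}\chi_{A_k}$ with $A_k:=\overline{B_{k+1}(p)}\setminus B_k(p)$ (using that $\zeta$ is nondecreasing), and for each $k$ build a nonnegative super-solution $w_k$ of $-\Delta w_k\ge \zeta(k-1)^{-1}\chi_{A_k}$ on $M$ with the quantitative bound
$$
\|w_k\|_{L^\infty(B_r(p))}\;\le\; C(r)\,\frac{\theta(k+1)-\theta(k)}{\lambda_{1}\!\left(M\setminus B_{k-1}(p)\right)\,\zeta(k-1)}
$$
for all sufficiently large $k$. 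Then $w:=\sum_k w_k$ converges locally uniformly on $M$ by the series hypothesis and satisfies $-\Delta w\ge|f|$ on $M$. To build $w_k$, the idea is to exploit the spectral gap $\lambda_{1}(M\setminus B_{k-1}(p))>0$: this makes $-\Delta$ invertible on $H^1_0(M\setminus B_{k-1}(p))$ and allows one to solve the Dirichlet problem $-\Delta v_k=\chi_{A_k}$ there with an $L^2$ bound of order $\vol(A_k)^{1/2}/\lambda_{1}(M\setminus B_{k-1}(p))$. A Moser-type mean-value inequality then upgrades the $L^2$ bound to an $L^\infty$ bound, with constants depending on local Sobolev and volume-doubling properties that are controlled by the Ricci lower bound encoded in $\theta$ via Bishop--Gromov volume comparison; this is how the factor $\theta(k+1)-\theta(k)$ arises.

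The principal obstacle is this last step: quantitatively matching the $L^\infty$ bound on $w_k$ with the precise series term requires carefully tracking how the possibly unbounded-below Ricci curvature, accessed only through the local quantity $\theta$, enters the Poincar\'e and Moser constants on the annulus $A_k$, and verifying that the spectral decay of the solution indeed dominates the geometric deterioration in exactly the prescribed way. Once $w$ is constructed, the maximum-principle comparison, interior Schauder estimates, and the diagonal extraction finish the argument without further technical difficulty.
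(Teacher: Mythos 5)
Your Dirichlet-exhaustion-plus-comparison framework is sound in outline, but the construction of the barrier $w$ has two genuine gaps. First, if $v_k$ solves $-\Delta v_k=\chi_{A_k}$ in $M\setminus B_{k-1}(p)$ with zero Dirichlet data, extending $v_k$ by $0$ into $B_{k-1}(p)$ does \emph{not} produce a weak supersolution on $M$: since $v_k\ge 0$ and vanishes on $\partial B_{k-1}(p)$, its outward normal derivative there is nonnegative (positive by Hopf), so the distributional Laplacian of the zero-extension carries a \emph{positive} singular measure on the sphere, making the extension a \emph{sub}solution across $\partial B_{k-1}(p)$. Hence $w=\sum_k w_k$ fails to satisfy $-\Delta w\ge|f|$ on $B_j(p)$ and the comparison $|u_j|\le w$ breaks down. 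Relatedly, if $w_k$ is supported in $M\setminus B_{k-1}(p)$, only finitely many $w_k$ are nonzero on any fixed $B_r(p)$, so the series hypothesis is not actually needed for local convergence of $\sum_k w_k$ — a signal that the summability condition is not being used where it matters.

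Second, and more fundamentally, the claimed $L^\infty$ bound $\|w_k\|\lesssim(\theta(k+1)-\theta(k))/(\lambda_1\zeta)$ does not follow from the route you sketch. The $L^2$ bound $\|v_k\|_{L^2}\le\vol(A_k)^{1/2}/\lambda_1(M\setminus B_{k-1}(p))$ carries the factor $\vol(A_k)^{1/2}$, which by \eqref{eq6} can grow \emph{exponentially} in $\theta(k)$, and the Moser constant on the annulus $A_k$ (Ricci bounded below only by $-(n-1)K(k+1)$) itself grows exponentially in $\sqrt{K(k+1)}\cdot 1\sim\theta(k+1)-\theta(k)$. Neither exponential is compensated, so the asserted bound does not arise from Bishop--Gromov plus Moser iteration. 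In the paper's proof the factor $\theta(m+1)-\theta(m)$ comes from an entirely different source: Yau's local gradient estimate (Lemma \ref{lemma0}) yields the pointwise lower bound $G(p,z)\gtrsim\exp(-C_0\theta(r(z)+1))$ (Lemma \ref{remark200}), which sets the ratio $\delta=a_{m+1}/a_m$ of the Green's level-set thresholds so that $-\log\delta\sim\theta(m+1)-\theta(m)$; this feeds into Proposition \ref{claim2} together with the coarea identity $\int_{\mathcal{L}(s)}|\nabla G|\,dA=1$ (Lemma \ref{lemma2}) and Donnelly's $L^2$ decay $\int_{M\setminus B_R}G^2\le Ce^{-2\sqrt{\beta}R}$ (Lemma \ref{lemmadonnelly}). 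Integration over level sets of $G$, not geodesic annuli, is what tames the exponential volume growth, and that cancellation has no counterpart in your construction; the paper also treats the parabolic case separately via the sign-changing Green's function, which your scheme does not address.
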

A few remarks concerning the assumptions are in order.

Since $\laess(M)\geq\lambda_{1}(M)$, it is clear that the result applies to a wider class of manifolds than those with positive spectrum. For instance we can deal with manifolds with positive essential spectrum and finite volume (e.g. hyperbolic manifolds with finite volume).

The quantities $\theta(j+1)-\theta(j)$ and $\lambda_{1}\left(M\setminus B_{j-1}(p)\right)$ are related to the geometry of the manifold at infinity. In particular, if $(M,g)$ has Ricci curvature bounded from below, $\ricc \geq -K$, then $\theta(j+1)-\theta(j) \leq C$ for any $j$. Moreover, by monotonicity, $\lambda_{1}\left(M\setminus B_{j-1}(p)\right) \geq \lambda_{1}(M)$. Thus, if $(M,g)$ satisfies $\lambda_{1}(M)>0$, $\ricc\geq -K$ and $f$ decays like $C/\big(1+r(x)\big)^{1+\varepsilon}$, for some $C,\varepsilon>0$, then all our assumptions are satisfied and we recover the existence result of Munteanu-Sesum's \cite{ms}. Note that our proof does not provide any information on the behavior of the solution at infinity, contrary to the aforementioned result. 

We also note that the assumptions of Theorem \ref{teo1} are sharp as shown by the example of a rotationally symmetric manifold with unbounded Ricci curvature,  provided in Section \ref{sec-ex}.

Concerning the regularity of $f$, we observe that, if $f$ is less regular, the same existence result holds for weak solutions.

In general, we can provide existence of solutions also on manifolds with unbounded Ricci curvature. For instance, we have the following

\begin{cor}\label{cor-1} Let $(M,g)$ be a complete non-compact Riemannian manifold with positive essential spectrum, i.e. $\laess(M)>0$, and let $f$ be a locally H\"older continuous function on $M$.  If
\begin{equation*}
\ricc\geq -C\big(1+r(x)\big)^{\gamma},\quad\quad |f (x)| \leq \frac{C}{\big(1+r(x)\big)^{1+\frac{\gamma}{2}+\varepsilon}},
\end{equation*}
for some $C>0$, $\gamma \geq 0$ and $\varepsilon>0$, then the Poisson equation
\begin{equation*}
-\Delta u=f \quad\hbox{in   } M
\end{equation*}
admits a classical solution $u$.
\end{cor}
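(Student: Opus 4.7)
The plan is to deduce Corollary \ref{cor-1} directly from Theorem \ref{teo1}. Set
$$\zeta(r) = \tfrac{1}{C}(1+r)^{1+\gamma/2+\varepsilon},$$
which is non-decreasing, continuous and positive on $[0,\infty)$, and for which the pointwise bound $|f(x)|\leq 1/\zeta(r(x))$ is exactly the hypothesis on $f$. It then suffices to verify the summability condition appearing in Theorem \ref{teo1}.

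The first step is to bound the increments $\theta(j+1)-\theta(j)$. By the construction recalled in \eqref{eq127} and the discussion following Theorem \ref{teo1}, $\theta(R)$ depends only on a lower bound for $\ricc$ on the geodesic ball $B_R(p)$. The assumption $\ricc\geq -C(1+r)^{\gamma}$ gives the effective bound $\ricc\geq -C(1+R)^{\gamma}$ on $B_R(p)$. Substituting this into the same computation which yields $\theta(j+1)-\theta(j)\leq \const$ in the model case $\ricc\geq -K$, and tracking the dependence on $K$ (which enters as $\sqrt{K}$ in the underlying Laplacian/volume comparison arguments), one obtains
$$\theta(j+1)-\theta(j)\leq C_1(1+j)^{\gamma/2}.$$

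The second step is to control $\lambda_{1}(M\setminus B_{j-1}(p))$ from below. By Persson's characterization
$$\laess(M) \;=\; \lim_{j\to\infty}\lambda_{1}\bigl(M\setminus \overline{B_{j}(p)}\bigr),$$
combined with the monotonicity of $j\mapsto \lambda_{1}(M\setminus B_j(p))$ and the hypothesis $\laess(M)>0$, we get $\lambda_{1}(M\setminus B_{j-1}(p))\geq c_0>0$ for all $j$ sufficiently large. The finitely many initial terms of the series are obviously harmless.

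Combining the two bounds with the explicit choice of $\zeta$, the tail of the series in Theorem \ref{teo1} is dominated by
$$\sum_j \frac{C_1(1+j)^{\gamma/2}}{c_0\,C^{-1}(1+j)^{1+\gamma/2+\varepsilon}} \;=\; C_2\sum_j\frac{1}{(1+j)^{1+\varepsilon}}\;<\;\infty,$$
so Theorem \ref{teo1} produces the desired classical solution. The only non-routine point is the quantitative tracking of the dependence of $\theta$ on the Ricci bound; since the comparison arguments behind \eqref{eq127} are purely local and the effective lower bound on $B_R(p)$ is $-C(1+R)^\gamma$, I expect this to reduce to a direct inspection of the definition, with the growth $(1+j)^{\gamma/2}$ arising from the square root of the curvature constant as in the constant-curvature comparison.
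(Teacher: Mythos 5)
Your proposal is correct and follows essentially the same route as the paper: choose $\zeta(r)=\frac1C(1+r)^{1+\frac{\gamma}{2}+\varepsilon}$, note that the Ricci bound forces $K(R)\sim(1+R)^{\gamma}$ so that $\theta(j+1)-\theta(j)\lesssim (1+j)^{\gamma/2}$, use $\lambda_{1}(M\setminus B_{j}(p))\to\laess(M)>0$ to bound the denominators, and conclude convergence of $\sum_j (1+j)^{-1-\varepsilon}$ before invoking Theorem \ref{teo1}. The only cosmetic difference is that the paper writes $\theta(j)=C\,j(2+j)^{\gamma/2}$ explicitly (so the increment bound follows from the mean value theorem), whereas you phrase this as tracking the $\sqrt{K}$ dependence; the content is the same.
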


Using a Barta-type estimate, we can show that on Cartan-Hadamard manifolds with $\ricc\leq -(1/C)(1+r)^{\gamma}$ for some $\gamma\geq0$, one has
$\lambda_{1}\left(M\setminus B_{j-1}(p)\right) \geq C \,j^{\gamma}$. So, in particular, $\lambda^{\operatorname{ess}}_{1}(M)>0$ (actually $\lambda_{1}(M)>0$) and we have the following corollary which generalizes the existence result in \cite[Proposition 1]{ms}.

\begin{cor}\label{cor-2} Let $(M,g)$ be a Cartan-Hadamard manifold and let $f$ be a locally H\"older continuous, bounded function on $M$. If
$$
-C\big(1+r(x)\big)^{\gamma_{1}} \leq \ricc \leq -\frac{1}{C}\big(1+r(x)\big)^{\gamma_{2}} ,\quad |f (x)| \leq \frac{C}{\big(1+r(x)\big)^{1+\frac{\gamma_{1}}{2}-\gamma_{2}+\varepsilon}},
$$
for some $C>0$, $\gamma_{1},\gamma_{2}\geq 0$ and $\varepsilon>0$ with $1+\frac{\gamma_{1}}{2}-\gamma_{2}+\varepsilon\geq 0$, then the Poisson equation
\begin{equation*}
-\Delta u=f \quad\hbox{in   } M
\end{equation*}
admits a classical solution $u$.
\end{cor}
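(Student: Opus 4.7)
The plan is to verify the hypotheses of Theorem \ref{teo1} with the choice $\zeta(r) = C^{-1}(1+r)^{1+\gamma_{1}/2-\gamma_{2}+\varepsilon}$. By the standing assumption, this exponent is nonnegative, so $\zeta$ is non-decreasing, and the pointwise bound on $f$ in the statement reads $|f(x)| \leq 1/\zeta(r(x))$ after adjusting constants.

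First I would establish the quantitative estimate
\[
\lambda_{1}\bigl(M \setminus B_{j-1}(p)\bigr) \geq C_{1} j^{\gamma_{2}}
\]
for all $j$ large, which is the Barta-type inequality announced just before the statement of the corollary. On the Cartan--Hadamard background, Laplacian comparison combined with $\ricc \leq -(1/C)(1+r)^{\gamma_{2}}$ gives $\Delta r \gtrsim (1+r)^{\gamma_{2}/2}$ for large $r$. Testing Barta's principle against a radial barrier $v = e^{-\phi(r)}$ with $\phi'(r) = c(1+r)^{\gamma_{2}/2}$ for a suitably small $c>0$ yields
\[
-\frac{\Delta v}{v} = \phi''(r) + \phi'(r)\Delta r - (\phi'(r))^{2} \geq C_{1}(1+r)^{\gamma_{2}}
\]
outside a large ball, from which the claimed $\lambda_{1}$ bound follows; in particular $\laess(M)\geq\lambda_{1}(M)>0$.

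Next I would control the increment $\theta(j+1)-\theta(j)$. Since $\theta$ is built (through \eqref{eq127}) from the square root of the local Ricci lower bound entering Laplacian comparison --- exactly the mechanism used in Corollary \ref{cor-1} --- the hypothesis $\ricc \geq -C(1+r)^{\gamma_{1}}$ yields
\[
\theta(j+1)-\theta(j) \leq C_{2}(1+j)^{\gamma_{1}/2}.
\]
Plugging both bounds into the series of Theorem \ref{teo1},
\[
\sum_{j}\frac{\theta(j+1)-\theta(j)}{\lambda_{1}(M\setminus B_{j-1}(p))\,\zeta(j-1)} \;\lesssim\; \sum_{j}\frac{(1+j)^{\gamma_{1}/2}}{j^{\gamma_{2}}\,(1+j)^{1+\gamma_{1}/2-\gamma_{2}+\varepsilon}} \;\lesssim\; \sum_{j}\frac{1}{(1+j)^{1+\varepsilon}} < \infty,
\]
so Theorem \ref{teo1} supplies the classical solution. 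The main obstacle is calibrating the barrier $\phi$ in the Barta step so that the quadratic defect $(\phi')^{2}$ is absorbed by $\phi'\Delta r$ with a strictly positive margin across the whole range $\gamma_{2}\geq 0$; once this polynomial ansatz is verified, the remaining steps are routine comparison geometry and summation.
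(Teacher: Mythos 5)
Your proposal is correct and follows essentially the same route as the paper: establish the Barta-type bound $\lambda_1(M\setminus B_{j-1}(p))\gtrsim j^{\gamma_2}$ via strong Laplacian comparison on a Cartan--Hadamard manifold, control the increments $\theta(j+1)-\theta(j)\lesssim j^{\gamma_1/2}$ using the Ricci lower bound as in Corollary~\ref{cor-1}, and feed both into the series of Theorem~\ref{teo1}, which converges exactly when $\varepsilon>0$. The only cosmetic difference is in the Barta step: you plug a radial barrier $v=e^{-\phi(r)}$ directly into Barta's principle and absorb $(\phi')^2$ by $\phi'\Delta r$ with a small constant, whereas the paper first derives the pointwise inequality $\Delta r\geq CR^{\gamma_2/2}$ on $M\setminus B_R(p)$ from comparison with the model $\varphi(r)=\exp(Br^{1+\gamma_2/2})$ and then cites a standard consequence of Barta's principle; both yield the same lower bound.
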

In particular, Corollary \ref{cor-2} implies that, if $\gamma_{1}=\gamma_{2}=\gamma>2$, existence of a solution to the Poisson equation is guaranteed whenever $f$ is bounded.
In the case $\gamma>2$, Cartan-Hadamard manifolds behave, in some sense, like bounded domains of $\RR^{n}$.

When the ambient manifold is non-parabolic, our approach follows the one in \cite{ms} which in turn uses some ideas originated in the work of Li-Wang \cite{liwa1}. In the other case, thanks to the works \cite{csz, liwa2} all ends of the manifold must have finite volume and thus we can use the decay estimates of Donnelly \cite{do} on general Green's functions to conclude.

Indeed, the proof of Theorem \ref{teo1} essentially consists in obtaining an a priori pointwise bound on the solution $u$ represented by using the Green's function. In order to do this, we need several estimates on the Green function. First a pointwise two-sided estimate is obtained, which is deduced from a local gradient estimate for harmonic functions. Then we get integral estimates from above for $G$. In particular, integral estimates on certain level sets will be derived. In such bounds the behaviour at infinity of Ricci curvature is involved. Finally, putting together conveniently such estimates and using the hypothesis that $\lambda^{\operatorname{ess}}_{1}(M)>0$, we can prove Theorem \ref{teo1}, by showing that for every $x\in M$
\[\left|\int_M G(x,y) f(y) dy \right| < \infty\,. \]

The paper is organized as follows: in Section \ref{sect-prel} we collect some preliminary results and we define precisely the function $\theta$; in Section \ref{sec-grad} we recall and prove (for the sake of completeness) the local gradient estimates for positive harmonic functions; in Section \ref{sec-est} we prove key estimates on the positive minimal Green's function $G(x,y)$ of a non-parabolic manifold; in Section \ref{sec-proofs} we prove Theorem \ref{teo1} and Corollary \ref{cor-1}; finally in Section \ref{sec-ex} we prove Corollary \ref{cor-2} and show the optimality of the assumption in Theorem \ref{teo1} for rotationally symmetric manifolds.

\

\section{Preliminaries}

\label{sect-prel}

Let $(M,g)$ be a complete non-compact $n$-dimensional Riemannian manifold. For any $x\in M$ and $R>0$, we denote by $B_{R}(x)$ the geodesic ball of radius R with centre $x$ and let $\vol(B_{R}(x))$ be its volume. We denote by $\ricc$ the Ricci curvature of $g$. For any $x \in M$, let $\mu(x)$ be the smallest eigenvalue of $\ricc$ at $x$. Thus, for any $V\in T_{x}M$ with $|V|=1$, $\ricc(V,V)(x) \geq \mu(x)$ and we have $\mu(x)\geq -\omega(r(x))$ for some $\omega\in C([0,\infty))$, $\omega\geq 0$. Hence, for any $x\in M$, we have
\begin{equation}\label{eq3}
\ricc(V,V)(x) \geq -(n-1) \frac{\varphi''(r(x))}{\varphi(r(x))},
\end{equation}
for some $\varphi\in C^{\infty}((0,\infty))\cap C^{1}([0,\infty))$ with $\varphi(0)=0$ and $\varphi'(0)=1$. Note that $\varphi,\varphi',\varphi''$ are positive in $(0,\infty)$, and $\frac{\varphi''}{\varphi}\in C([0, +\infty))$. Fix any $\varepsilon_0>0$ so that 
\[\Delta r(x) \geq 0\quad \textrm{in}\;\; B_{4\varepsilon_0}(p)\,.\]
For any $R>\varepsilon_0$ we set
\begin{align}
\tilde{K}(R) &:= \sup_{y\in B_{R}(p)\setminus B_{\varepsilon_0}(p)} \frac{\varphi''(r(y))}{\varphi(r(y))}, \nonumber\\
\nonumber \hat{K}(R) &:= \sup_{y\in B_{R}(p)\setminus B_{\varepsilon_0}(p)} \frac{\varphi'(r(y))}{\varphi(r(y))}, \\ \label{eq5}
K(R)&:= \max\{1,\tilde{K}_{R}(x), \hat{K}_{R}(x) \}.
\end{align}
For any $R>1$ we define
\begin{equation}\label{eq127}
\theta(R):=R \sqrt{K(R)}.
\end{equation}
Note that $R\mapsto\theta(R)$ is increasing and so invertible.

Under $\eqref{eq3}$, we know that
\begin{equation}\label{eq6}
\vol(B_{R}(p)) \leq C \int_{0}^{R}\varphi^{n-1}(\xi)\,d\xi.
\end{equation}
Moreover, let $\operatorname{Cut}(p)$ be the {\em cut locus} of $p\in M$. By standard Laplacian comparison results (see e.g. \cite[Theorem 1.11]{masrigset}),
\begin{equation}\label{eq7}
\Delta r(x) \leq (n-1)\frac{\varphi'(r(x))}{\varphi(r(x))}
\end{equation}
pointwise in $M\setminus (\{p\}\cup \operatorname{Cut}(p))$ and weakly on $M$.

It is known that every complete Riemannian manifold admits a Green's function (see \cite{mal}), i.e. a smooth function defined in $(M\times M)\setminus \{(x,y)\in M\times M:\,x=y\} $ such that $G(x,y)=G(y,x)$ and $\Delta_{y} G(x,y)=-\delta_{x}(y)$. We say that $(M,g)$ is non-parabolic if there exists a minimal positive Green's function $G(x,y)$ on $(M,g)$, and parabolic otherwise.

Let $\lambda_{1}(M)$ be the bottom of the $L^{2}$-spectrum of $-\Delta$. It is known that $\lambda_{1}(M)\in[0,+\infty)$ and it is given by the variational formula
$$
\lambda_{1}(M) = \inf_{v\in C^{\infty}_{c}(M)}\frac{\int_{M}|\nabla v|^{2}\,dV}{\int_{M}v^{2}\,dV}\,.
$$
If $\lambda_{1}(M)>0$, then $(M,g)$ is non-parabolic (see \cite[Proposition 10.1]{gri1}). Whenever $(M,g)$ is non-parabolic, let $G_{R}(x,y)$ be the Green's function of $-\Delta$ in $B_{R}(z)$ satisfying zero Dirichlet boundary conditions on $\partial B_{R}(z)$, for some $z\in M$. We have that $R\mapsto G_{R}(x,y)$ is increasing and, for any $x,y\in M$,
\begin{equation}\label{eq9}
G(x,y) = \lim_{R\to\infty} G_{R}(x,y),
\end{equation}
locally uniformly in $(M\times M)\setminus \{(x,y)\in M\times M:\,x=y\} $.
We define $\lambda_{1}(\Omega)$, with $\Omega$ an open subset of $M$, to be the first eigenvalue of $-\Delta$ in $\Omega$ with zero Dirichlet boundary conditions. It is well known that $\lambda_{1}(\Omega)$ is decreasing with respect to the inclusion of subsets. In particular $R\mapsto\lambda_{1}(B_{R}(x))$ is decreasing and $\lambda_{1}(B_{R}(x))\to \lambda_{1}(M)$ as $R\to\infty$.

Another interesting quantity of $(M,g$), which we denote by $\laess(M)$, is the greatest lower bound of the essential spectrum of $-\Delta$, which consists of points of the spectrum of $-\Delta$ which are either accumulation points of points on the spectrum or which correspond to discrete eigenvalues of $-\Delta$ with infinite multiplicity. It is known that if $(M,g)$ is compact, the essential spectrum is empty. We also have $\lambda_{1}(M)\leq \laess(M)$ and
$$
\laess(M)=\sup_{K} \lambda_{1}(M\setminus K),
$$
where $K$ runs through all compact subsets of $M$.

\

\

We explicitly note that by $C$ we will denote a positive constant, whose value could vary.

%
%
%
%
%
%

\

\section{Local gradient estimate for harmonic functions} \label{sec-grad}
Following the classical argument of Yau, we obtain the next local gradient estimate. 

\begin{lemma}\label{lemma0} Fin any $\varepsilon_0>0$. Let $R+1>3\varepsilon_0$ and let $u\in C^{2}(B_{R+1}(p)\setminus B_{\varepsilon_0}(p))$ be a positive harmonic function in $B_{R+1}(p)\setminus B_{\varepsilon_0}(p)$. Then
$$
|\nabla u(\xi)| \leq C \sqrt{K(R+1)}\, u(\xi)\quad\text{for any}\quad \xi\in B_{R}(p)\setminus B_{3\varepsilon_0}(p),
$$
with $K(R)$ defined as in \eqref{eq5} and for some positive constant $C=C(\varepsilon_0)>0$.
\end{lemma}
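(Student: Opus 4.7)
The plan is to follow Yau's classical log-trick. Set $v=\log u$, which is well defined since $u>0$. Because $u$ is harmonic one obtains
$$\Delta v \;=\;\frac{\Delta u}{u}-\frac{|\nabla u|^{2}}{u^{2}}\;=\;-|\nabla v|^{2}.$$
Applying the Bochner formula to $v$ and using the Ricci lower bound $\eqref{eq3}$ together with $|\mathrm{Hess}\,v|^{2}\geq (\Delta v)^{2}/n=|\nabla v|^{4}/n$, I would derive on $B_{R+1}(p)\setminus B_{\varepsilon_0}(p)$ the pointwise inequality
$$\tfrac{1}{2}\Delta|\nabla v|^{2}\;\geq\;\tfrac{1}{n}|\nabla v|^{4}-\langle\nabla|\nabla v|^{2},\nabla v\rangle-(n-1)K(R+1)\,|\nabla v|^{2}.$$
This is the key differential inequality.

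Next I would choose a radial cutoff $\eta(x)=\psi(r(x))$ with $\psi\in C^{2}([0,\infty))$, $\psi\equiv 1$ on $[3\varepsilon_0,R]$, $\psi\equiv 0$ on $[0,\varepsilon_0]\cup[R+1,\infty)$, and $|\psi'|+|\psi''|\leq C(\varepsilon_0)$ together with $(\psi')^{2}\leq C(\varepsilon_0)\,\psi$. Then $|\nabla\eta|^{2}\leq C(\varepsilon_0)$, while from $\Delta\eta=\psi''(r)+\psi'(r)\Delta r$ and the Laplacian comparison $\eqref{eq7}$, together with the convexity information $\Delta r\geq 0$ on $B_{4\varepsilon_0}(p)$ (so the inner annulus contributes a term of the correct sign), I get
$$|\Delta\eta|\;\leq\;C(\varepsilon_0)\,K(R+1),$$
because $\varphi'(r)/\varphi(r)\leq \hat K(R+1)\leq K(R+1)$ on the annular support of $\psi'$.

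Now consider $F=\eta^{2}|\nabla v|^{2}$, which vanishes near the boundary of the annulus, and hence attains its maximum at some interior point $x_{0}$. If $x_0$ lies in the cut locus of $p$, I invoke Calabi's trick by replacing $r$ by a smooth perturbation from a nearby pole along a minimizing geodesic, so that the comparison inequalities used above still apply at $x_0$. At $x_0$ one has $\nabla F=0$ and $\Delta F\leq 0$. The first condition yields $\eta\nabla|\nabla v|^{2}=-2|\nabla v|^{2}\nabla\eta$, which I substitute into the expansion of $\Delta F$ and into the cross term $\langle\nabla|\nabla v|^{2},\nabla v\rangle$ in the Bochner inequality; combining the two and applying Young's inequality to absorb the term $2\eta|\nabla v||\nabla\eta|$ into $\tfrac{1}{n}\eta^{2}|\nabla v|^{2}$, I obtain at $x_0$
$$\tfrac{1}{2n}\eta^{2}|\nabla v|^{2}\;\leq\;(n-1)K(R+1)\eta^{2}+C(\varepsilon_0)|\nabla\eta|^{2}+C(\varepsilon_0)\eta|\Delta\eta|\;\leq\;C(\varepsilon_0)\,K(R+1),$$
where in the last step I used $K(R+1)\geq 1$. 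Since $\eta\equiv 1$ on $B_{R}(p)\setminus B_{3\varepsilon_0}(p)$ and $F$ attains its max at $x_0$, this gives $|\nabla v|^{2}\leq C(\varepsilon_0)K(R+1)$ on the required set, which is exactly the desired estimate after recalling $|\nabla v|=|\nabla u|/u$.

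The main obstacle I anticipate is the cut-locus issue: both the Bochner computation and the bound on $\Delta\eta$ need the distance function $r$ to be smooth at the maximum point $x_0$. The standard remedy via Calabi's trick (or, equivalently, using the fact that $\eqref{eq7}$ holds in the distributional sense on all of $M$ and approximating by a smooth barrier) handles this cleanly, but care is needed to verify that the resulting inequalities are preserved through the perturbation. The other delicate point is choosing the cutoff so that its derivatives scale correctly with $\varepsilon_0$ on the inner boundary and with $K(R+1)$ on the outer boundary; the asymmetry is hidden in the fact that we accept a constant depending on $\varepsilon_0$ while insisting on the sharp $\sqrt{K(R+1)}$ dependence at infinity.
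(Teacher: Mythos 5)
Your approach is essentially the same as the paper's: Yau's log-trick $v=\log u$, the Bochner--Weitzenb\"ock formula with Newton's inequality, a radial cutoff $\eta$, the maximum principle applied to $\eta^2|\nabla v|^2$, and Calabi's trick for the cut locus. The overall structure and the bookkeeping (replacing $\varphi''/\varphi$, $\varphi'/\varphi$ by $K(R+1)$, using $(\psi')^2\leq C\psi$) all match the paper.

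There is, however, one genuine inaccuracy in the way you state the cutoff estimate: you claim $|\Delta\eta|\leq C(\varepsilon_0)K(R+1)$, but this two-sided bound does not follow from the hypotheses and in fact is false in general. The Laplacian comparison \eqref{eq7} only gives an \emph{upper} bound $\Delta r\leq (n-1)\varphi'/\varphi$; there is no pointwise lower bound on $\Delta r$ (e.g.\ near the cut locus $\Delta r$ develops arbitrarily large negative parts), so on the outer annulus where $\psi'\leq 0$ the term $\psi'\Delta r$ can be arbitrarily large \emph{positive} and $\Delta\eta$ is not bounded from above. What the argument actually needs is the one-sided bound $\eta\,\Delta\eta\geq -C(\varepsilon_0)K(R+1)$ (equivalently $-\Delta\eta^2$ bounded from above), and that \emph{does} hold precisely because of the sign structure: on the outer annulus $\psi'\leq 0$ combines with the upper bound on $\Delta r$ to give $\psi'\Delta r\geq -C\hat K(R+1)$, and on the inner annulus $\psi'\geq 0$ combines with $\Delta r\geq 0$ (your ``convexity'' observation, guaranteed by the choice of $\varepsilon_0$) to give $\psi'\Delta r\geq 0$. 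The paper keeps this straight by splitting with the characteristic functions $\chi_R$ and $\chi_{\varepsilon_0}$ and tracking signs throughout. Your final chain replaces $-\eta\Delta\eta$ with $\eta|\Delta\eta|$, which discards the favorable sign and then rests on the false two-sided claim; once you instead retain the signed quantity and use only the lower bound on $\eta\Delta\eta$, your computation goes through and reproduces the paper's estimate. So the error is a correctable one, but the step as written does not compile.
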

\begin{proof}
Let $v:=\log u$. Then
$$
\Delta v = - |\nabla v|^{2} .
$$
Let $\eta(\xi)=\eta(\rho(\xi))$, with $\rho(\xi):=\operatorname{dist}(\xi,p)$, be a smooth cut-off function, with support in $B_{R+1}(p)\setminus B_{2\varepsilon_0}(p)$, $0\leq \eta\leq 1$, such that $\eta(\xi)\equiv 1$ on $B_{R}(p)\setminus B_{3\varepsilon_0}(p)$, $$-C\leq \frac{\eta'}{\eta^{1/2}} \leq 0 \quad\text{and}\quad \frac{|\eta''|}{\eta} \leq C\quad \textrm{in}\;\; B_{R+1}(p)\setminus B_{R}(p),$$ 
and 
$$0\leq \frac{\eta'}{\eta^{1/2}} \leq \frac{2}{\varepsilon_0}, \quad\text{and}\quad \frac{|\eta''|}{\eta} \leq \frac{2}{\varepsilon_0^{2}}\quad \textrm{in}\;\; B_{3\varepsilon_0}(p)\setminus B_{2\varepsilon_0}(p).$$

Let $w=\eta^{2}|\nabla v|^{2}$. Then
\begin{align*}
\frac12 \Delta w &= \frac12 \eta^{2} \Delta |\nabla v|^{2} + \frac12 |\nabla v|^{2} \Delta \eta^{2} + \langle \nabla |\nabla v|^{2},\nabla \eta^{2}\rangle.
\end{align*}
Hence, from classical Bochner-Weitzenb\"och formula and Newton inequality, one has
\begin{align*}
\frac12 \Delta |\nabla v|^{2} & =  |\nabla^{2} v|^{2} + \ricc(\nabla v,\nabla v) + \langle \nabla v,\nabla \Delta v\rangle \\
&\geq \frac1n (\Delta v)^{2} - (n-1) \frac{\varphi''}{\varphi} |\nabla v|^{2} - \langle \nabla |\nabla v|^{2},\nabla v\rangle \\
&= \frac1n |\nabla v|^{4} - (n-1) \frac{\varphi''}{\varphi} |\nabla v|^{2} - \langle \nabla |\nabla v|^{2},\nabla v\rangle .
\end{align*}
Moreover, let $\chi_R$ be the characteristic function of the set $B_{R+1}(p)\setminus B_{R}(p)$, and let $\chi_{\varepsilon_0}$ be the characteristic function of the set $B_{3\varepsilon_0}(p)\setminus B_{2\varepsilon_0}(p)$. By \eqref{eq7},
\begin{align*}
\frac12 \Delta \eta^{2} &= \eta \eta' \Delta \rho + \eta \eta'' + (\eta')^{2} \\
&\geq (n-1)\frac{\varphi'}{\varphi}\eta\eta'\chi_R + \eta \eta''+ (\eta')^{2}\\
&\geq -C \left((n-1)\frac{\varphi'}{\varphi}+1\right)\eta \chi_R - \frac{2}{\varepsilon_0^2}\eta \chi_{\varepsilon_0}
\end{align*}
pointwise in $[B_{R+1}(p)\setminus B_{\varepsilon_0}(p)]\setminus (\{z\}\cup \operatorname{Cut}(p))$ and weakly on $B_{R+1}(p)\setminus B_{\varepsilon_0}(p)$ with $C>0$. Thus,
\begin{align*}
\frac12 \Delta w &\geq \frac1n \frac{w^{2}}{\eta^{2}}-(n-1)\frac{\varphi''}{\varphi}w - C\left((n-1)\frac{\varphi'}{\varphi}+1\right)\frac{w}{\eta}\chi_R-\frac{2}{\varepsilon_0^2}\frac{w}{\eta} \chi_{\varepsilon_0} \\
&-4\frac{|\eta'|^{2}}{\eta^{2}}w + \frac{2}{\eta}\langle \nabla w,\nabla \eta\rangle-\langle \nabla w,\nabla v\rangle + \frac{2}{\eta}\langle \nabla v,\nabla \eta \rangle w \\
&\geq \frac1n \frac{w^{2}}{\eta^{2}}-(n-1)\frac{\varphi''}{\varphi}w - C'\left((n-1)\frac{\varphi'}{\varphi}+1\right)\frac{w}{\eta}\chi_R -\frac{2}{\varepsilon_0^2}\frac{w}{\eta} \chi_{\varepsilon_0} \\
& + \frac{2}{\eta}\langle \nabla w,\nabla \eta\rangle-\langle \nabla w,\nabla v\rangle-C\frac{ w^{3/2}}{\eta^{3/2}}\\
&\geq \frac{1}{2n} \frac{w^{2}}{\eta^{2}}-(n-1)\frac{\varphi''}{\varphi}w - C''\left(\chi_R(n-1)\frac{\varphi'}{\varphi}+1\right)\frac{w}{\eta} \\
& + \frac{2}{\eta}\langle \nabla w,\nabla \eta\rangle-\langle \nabla w,\nabla v\rangle\,,
\end{align*}
with $C''>0$. Let $q$ be a maximum point of $w$ in $\overline{B}_{R+1}(p)\setminus B_{\varepsilon_0}(p)$. Since $w\equiv0$ on $\partial B_{R+1}(p)\cup \partial B_{\varepsilon_0}(p)$, we have  $q\in B_{R+1}(p)\setminus \overline{B}_{\varepsilon_0}(p)$. First assume $q\notin \operatorname{Cut}(p)$. At $q$, we obtain
\begin{align*}
0 &\geq \left[\frac{1}{2n} w - (n-1)\frac{\varphi''}{\varphi}-C'\Big(\frac{\varphi'}{\varphi}\chi_R+1\Big)\right]w.
\end{align*}
So,
$$
w(q)\leq 2n(n-1)\big(1+C'\big) \left(1+\frac{\varphi'\big(r(q)\big)}{\varphi\big(r(q)\big)}\chi_R(q)+\frac{\varphi''\big(r(q)\big)}{\varphi\big(r(q)\big)}\right).
$$
Thus, for any $\xi \in B_{R}(p)\setminus B_{3\varepsilon_0}(p)$,
$$
|\nabla v(\xi)|^{2}\leq 2n(n-1)(1+C') \left(1+\frac{\varphi'\big(r(q)\big)}{\varphi\big(r(q)\big)}\chi_R(q)+\frac{\varphi''\big(r(q)\big)}{\varphi\big(r(q)\big)}\right).
$$
Finally, we get
$$
\frac{|\nabla u(\xi)|}{u(\xi)}=|\nabla v(\xi)| \leq C \sqrt{K(R+1)}.
$$
with $C>0$. By the standard Calabi trick (see \cite{cal, cy}), the same estimate can be obtained when $q\in \operatorname{Cut}(p)$. This concludes the proof of the lemma.

\end{proof}

\

\section{Green's function estimates} \label{sec-est}
This section is devoted to various estimates for the Green function. In particular, we recall from \cite{do} an estimate for
\[\int_{M\setminus B_R(x)} [G(x,y)]^2 dy\,,\]
for any $x\in M$, which holds without assuming that $G$ is positive.

Furthermore, for any $0\leq a < b\leq +\infty$, let
\begin{align*}
\mathcal{L}(a,b)&:= \{y \in M\,:\, a< G(p,y)< b \}.
\end{align*}
We obtain estimates from above (see subsection \ref{lsest}) for
\[\int_{\mathcal{L}(a,b)} G(p,y) dy\,,\]
for suitable choices of $a, b.$

\subsection{Pointwise estimate}\label{pointest}

By the continuity of $y\mapsto G(p, y)$ in $M\setminus\{p\}$, obviously we have the following
\begin{lemma}\label{lemma1} Let $(M,g)$ be non-parabolic and let $y\in M$ with $y\in \partial B_{1}(p)$. Then there exists a positive constants $A>1$ such that
$$
A^{-1}\leq G(p,y) \leq A .
$$
\end{lemma}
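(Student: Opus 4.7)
The plan is to use compactness of the unit sphere $\partial B_1(p)$ together with the continuity and strict positivity of the minimal positive Green's function. Since $(M,g)$ is assumed non-parabolic, the minimal positive Green's function $G(p,\cdot)$ exists and, by the preliminaries recalled in Section \ref{sect-prel}, is smooth (in particular continuous) on $M\setminus\{p\}$. Moreover, as a minimal positive Green's function it satisfies $G(p,y)>0$ for every $y\in M\setminus\{p\}$ (this can be seen from the approximation \eqref{eq9} by the Dirichlet Green's functions $G_R$ on exhausting balls, each of which is strictly positive inside $B_R$ by the strong maximum principle, and whose increasing limit is therefore also strictly positive).

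Next, since $(M,g)$ is complete, the Hopf--Rinow theorem ensures that the closed geodesic ball $\overline{B_1(p)}$ is compact, and hence so is its topological boundary $\partial B_1(p)$. The continuous, strictly positive function $y\mapsto G(p,y)$ therefore attains a positive minimum $m:=\min_{y\in\partial B_1(p)} G(p,y)>0$ and a finite maximum $M':=\max_{y\in\partial B_1(p)} G(p,y)<\infty$ on $\partial B_1(p)$.

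It then suffices to set
\[
A:=\max\bigl\{2,\; M',\; m^{-1}\bigr\}>1,
\]
which yields $A^{-1}\leq G(p,y)\leq A$ uniformly for every $y\in\partial B_1(p)$, as claimed. There is essentially no obstacle here: the only point that needs a word of justification is the strict positivity of $G(p,\cdot)$, which, as noted above, follows directly from its definition as an increasing limit of strictly positive Dirichlet Green's functions on exhausting balls.
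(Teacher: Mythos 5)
Your proposal is correct and takes essentially the same approach as the paper: the paper states the lemma follows ``by the continuity of $y\mapsto G(p,y)$ in $M\setminus\{p\}$'' and treats it as obvious, and you are simply spelling out the compactness-plus-positivity argument behind that remark.
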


\begin{rem}\label{remark100} Indeed, the estimate from below given in Lemma \ref{lemma1} holds for any any $y\in \overline{B_{1}(p)}$. This follows from Lemma \ref{lemma1} with $y\in \partial B_{1}(p)$ and the maximum principle, since $y\mapsto G(p,y)$ is harmonic in $B_{1}(p)\setminus\{p\}$ and $G(p,y)\to \infty$ as $y\to p$.
\end{rem}

\begin{lemma}\label{remark200}
Let $z\in M\setminus B_{1}(p)$, and consider the minimal unit speed geodesic $\gamma$ joining $p$ and $z$, and let $z_{0}\in\partial B_{1}(p)$ be a point of intersection of $\gamma$ with $\partial B_{1}(p)$. Then
$$
G(p,z) \geq G(p,z_{0}) \exp \left(-C_{0}\sqrt{K(r(z)+1)}r(z)\right)\,.
$$
\end{lemma}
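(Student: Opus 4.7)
The plan is to apply the local gradient estimate of Lemma \ref{lemma0} to the positive harmonic function $y\mapsto G(p,y)$ on a large punctured ball around $p$, and then to integrate the resulting logarithmic gradient bound along the minimizing geodesic $\gamma$ from $p$ to $z$.

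First, shrinking $\varepsilon_0$ if necessary, we may assume $3\varepsilon_0<1$ (so that $z_0\in\partial B_1(p)$ lies in the annulus on which the gradient estimate is valid), while still having $\Delta r\geq 0$ on $B_{4\varepsilon_0}(p)$. Since $(M,g)$ is non-parabolic, $y\mapsto G(p,y)$ is smooth, positive, and harmonic on $M\setminus\{p\}$, so in particular on $B_{R+1}(p)\setminus B_{\varepsilon_0}(p)$ for any $R+1>3\varepsilon_0$. Applying Lemma \ref{lemma0} (with a slightly larger outer radius if needed so as to include the endpoint $z$ on the boundary of $B_R(p)$, and using the monotonicity and continuity of $K$), we obtain
$$
\frac{|\nabla G(p,\xi)|}{G(p,\xi)} \leq C\sqrt{K(r(z)+1)}\qquad\text{for every }\xi\in\overline{B_{r(z)}(p)}\setminus B_{3\varepsilon_0}(p).
$$

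Next, parametrize $\gamma$ by arc length with $\gamma(0)=p$, so that $r(\gamma(t))=t$ for $t\in[0,R]$ where $R:=r(z)$, $\gamma(1)=z_0$, and $\gamma(R)=z$. We may assume $R>1$, otherwise $z=z_0$ and the statement is trivial. Set $h(t):=\log G(p,\gamma(t))$ for $t\in(0,R]$. Then along $\gamma$,
$$
|h'(t)|=\frac{|\langle \nabla G(p,\gamma(t)),\gamma'(t)\rangle|}{G(p,\gamma(t))}\leq \frac{|\nabla G(p,\gamma(t))|}{G(p,\gamma(t))}\leq C\sqrt{K(r(z)+1)}
$$
for every $t\in[1,R]$, since for such $t$ the point $\gamma(t)$ lies in the annulus where the gradient estimate holds (as $3\varepsilon_0<1\leq t\leq R$). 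Integrating from $1$ to $R$,
$$
\log G(p,z)-\log G(p,z_0) \geq -C\sqrt{K(r(z)+1)}\,(R-1)\geq -C_0\sqrt{K(r(z)+1)}\,r(z),
$$
and exponentiating yields the claimed lower bound.

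The only substantive point is arranging that the geodesic $\gamma$ stays within the domain of validity of Lemma \ref{lemma0} for $t\in[1,R]$; this is achieved by the initial choice $3\varepsilon_0<1$ together with $r(\gamma(t))=t$, so there is no real obstacle beyond bookkeeping. All geometric information (the dependence on the Ricci bound through $\varphi$) is already encoded in $K(r(z)+1)$ via Lemma \ref{lemma0}.
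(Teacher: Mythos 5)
Your proof is correct and follows essentially the same route as the paper: both apply the local gradient estimate of Lemma \ref{lemma0} to the positive harmonic function $y\mapsto G(p,y)$ and integrate along the minimizing geodesic from $z_0$ to $z$, with the same bookkeeping on $\varepsilon_0$ and the outer radius. The only cosmetic difference is that you integrate the logarithmic derivative $\frac{d}{dt}\log G(p,\gamma(t))$ directly, whereas the paper integrates the inequality for $G$ itself and concludes via Gronwall's inequality; the two steps are equivalent.
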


\begin{proof} Fix any $\varepsilon_0\in \left(0, \frac 1{2} \right)$. By Lemma \ref{lemma0} we get for every $\xi\in \gamma$
$$
|\nabla G(p,\xi)| \leq C \sqrt{K(r(z)+1)}\,G(p,\xi) .
$$
Let $l(\gamma)$ be the length of $\gamma$. We have
\begin{align*}
G(p,z)&=G(p,z_0)+\int_{1}^{l(\gamma)}\langle \nabla G(p,\gamma(s)), \dot{\gamma}(s)\rangle \,ds \\
&\geq G(p,z_0) - C\sqrt{K(r(z)+1)}\int_{1}^{l(\gamma)} G(p,\gamma(s)) \,ds.
\end{align*}
By Gronwall inequality the conclusion follows.

\end{proof}

\subsection{Auxiliary estimates}\label{auxest}
For any $s>0$, let
\begin{align*}
\mathcal{L}(s) &:= \{y \in M\,:\,G(p,y)=s \}.
\end{align*}

\begin{lemma}\label{lemma2} Let $(M,g)$ be non-parabolic. For any $s>0$, there holds
$$
\int_{\mathcal{L}(s)}|\nabla G(p,y)|\,dA(y) = 1
$$
where $dA(y)$ is the $(n-1)$-dimensional Hausdorff measure on $\mathcal{L}(s)$.
\end{lemma}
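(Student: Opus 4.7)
The plan is to obtain the identity as a direct consequence of the divergence theorem, exploiting two facts about $G(p,\cdot)$: it is harmonic on $M\setminus\{p\}$, and its defining property $-\Delta_y G(p,y)=\delta_p(y)$ encodes exactly a unit flux concentrated at $p$. The level set $\mathcal{L}(s)$ is the natural object to capture this flux once we move it out from the singular point.

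First I would note that, by Sard's theorem, almost every $s>0$ is a regular value of $G(p,\cdot)$, so that $\mathcal{L}(s)$ is a smooth embedded hypersurface and $|\nabla G|$ does not vanish on it. For such $s$ I would consider the superlevel set
$$\Omega_s:=\{y\in M\,:\,G(p,y)>s\}.$$
Since $(M,g)$ is non-parabolic and $G$ is the minimal positive Green's function, obtained as the increasing limit $G=\lim_{R\to\infty}G_R$ in \eqref{eq9}, a standard maximum principle argument shows that $\Omega_s$ is relatively compact in $M$ and contains $p$. Thus $\partial\Omega_s=\mathcal{L}(s)$ and, choosing $\varepsilon>0$ small so that $\overline{B_\varepsilon(p)}\subset\Omega_s$, the domain $D_\varepsilon:=\Omega_s\setminus\overline{B_\varepsilon(p)}$ has smooth boundary and $G(p,\cdot)$ is harmonic on it.

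Next I would apply the divergence theorem on $D_\varepsilon$:
$$0=\int_{D_\varepsilon}\Delta G\,dV=\int_{\mathcal{L}(s)}\langle\nabla G,\nu_1\rangle\,dA+\int_{\partial B_\varepsilon(p)}\langle\nabla G,\nu_2\rangle\,dA,$$
where $\nu_1,\nu_2$ denote the outward unit normals with respect to $D_\varepsilon$. On $\mathcal{L}(s)$, since $G$ decreases as one exits $\Omega_s$, we have $\nu_1=-\nabla G/|\nabla G|$ and hence $\langle\nabla G,\nu_1\rangle=-|\nabla G|$. On $\partial B_\varepsilon(p)$, $\nu_2$ points into $B_\varepsilon(p)$, and the distributional identity $-\Delta_y G(p,y)=\delta_p(y)$ gives
$$\int_{\partial B_\varepsilon(p)}\langle\nabla G,\nu_2\rangle\,dA=-\int_{B_\varepsilon(p)}\Delta G\,dV\longrightarrow 1\quad\text{as }\varepsilon\to 0^+.$$
Letting $\varepsilon\to 0^+$ gives $\int_{\mathcal{L}(s)}|\nabla G|\,dA=1$ for every regular value $s$.

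Finally I would extend the identity to every $s>0$ via the coarea formula: for any $0<s_1<s_2$,
$$\int_{\{s_1<G<s_2\}}|\nabla G|^2\,dV=\int_{s_1}^{s_2}\int_{\mathcal{L}(t)}|\nabla G|\,dA\,dt=s_2-s_1,$$
which, combined with continuity of $G$ in $y$, forces the identity at the remaining (measure-zero) critical values as well. The main technical obstacle in this plan is ensuring that the superlevel set $\Omega_s$ is indeed relatively compact so that the divergence theorem is applicable; this uses non-parabolicity crucially through the approximation by the Dirichlet Green's functions $G_R$, together with the fact that the minimal positive $G(p,\cdot)$ tends to $0$ at infinity along the exhaustion.
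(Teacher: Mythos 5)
Your plan follows the standard route (divergence theorem applied across level sets of $G$, combined with the coarea formula), which is essentially the argument behind \cite[Lemma 2]{ms} that the paper cites for this statement. However, there is a genuine gap in the step where you assert that ``a standard maximum principle argument shows that $\Omega_s$ is relatively compact.''

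This is not true under the hypothesis of the lemma. Non-parabolicity of $(M,g)$ guarantees that $\inf_M G(p,\cdot)=0$, but it does \emph{not} guarantee that $G(p,y)\to 0$ as $y\to\infty$. If $M$ has a parabolic end $E$ (which is perfectly compatible with $M$ being non-parabolic, as long as at least one end is non-parabolic), then the minimal positive Green's function is bounded below on $E$ by a positive constant, so for $s$ smaller than that constant the superlevel set $\Omega_s$ contains all of $E$ and is unbounded. Your divergence-theorem computation on $D_\varepsilon=\Omega_s\setminus\overline{B_\varepsilon(p)}$ is then not directly applicable. (The conclusion of the lemma is still correct, because the flux of $G(p,\cdot)$ through cross-sections of a parabolic end vanishes, but that requires a separate argument you have not supplied.) The standard way to avoid the issue — and the way the cited proof proceeds — is to first carry out the argument with the Dirichlet Green's function $G_R$ on $B_R(p)$: there $G_R\equiv 0$ on $\partial B_R(p)$, so $\{G_R>s\}$ is automatically compactly contained in $B_R(p)$, the divergence theorem gives $\int_{\{G_R=s\}}|\nabla G_R|\,dA=1$ for a.e.\ $s$, and one then passes to the limit $R\to\infty$ using the monotone, locally $C^1_{\mathrm{loc}}$ convergence $G_R\to G$ together with the coarea identity. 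A secondary, smaller issue: your final upgrade from ``a.e.\ $s$'' to ``every $s$'' via the coarea formula only yields that $s\mapsto\int_{\mathcal{L}(s)}|\nabla G|\,dA$ equals $1$ almost everywhere; to conclude the identity at every $s$ you additionally need some continuity of this map in $s$, which continuity of $G$ alone does not immediately give.
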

For the proof see \cite[Lemma 2]{ms}. Moreover, on manifolds with positive essential spectrum, the following decay estimate holds (see  \cite[Section 4]{do} and also \cite[Corollary 1.3]{liwa2} for a sharp estimate in the case of positive spectrum).

\begin{lemma}\label{lemmadonnelly} Assume $\laess(M)>0$ and let $0<\beta<\laess(M)$. Then, for any $x\in M, R\geq 1$, one has
\begin{itemize}

\item[(i)] If $(M,g)$ is non-parabolic, then the minimal positive Green's function $G(x,y)$ satisfies 
$$
\int_{M\setminus B_{R}(x)} G(x,y)^{2}\,dy \leq C \exp\left(-2\sqrt{\beta}\,R\right);
$$

\item[(ii)] If $V:=\vol(M)<\infty$, then the (sign changing) Green's function $\bar{G}(x,y)$
\begin{equation}\label{greendon}
\bar{G}(x,y):=\int_0^\infty \left(p(x,y,t)-\frac{1}{V}\right)dt
\end{equation}
satisfies $-\Delta \bar{G}(\cdot,y)=\delta_y-\frac{1}{V}$ and
$$
\int_{M\setminus B_{R}(x)} |\bar{G}(x,y)|^{2}\,dy \leq C \exp\left(-2\sqrt{\beta}\,R\right).
$$
\end{itemize}
\end{lemma}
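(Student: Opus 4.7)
My plan is to follow Donnelly's heat-kernel based approach, handling the two parts of the lemma in parallel. The starting point is the representation $G(x,y)=\int_0^\infty p(x,y,t)\,dt$ of the minimal positive Green's function in the non-parabolic case of part (i), and the analogous representation \eqref{greendon} subtracting off the $t\to\infty$ limit $1/V$ in the finite volume case (ii), which corresponds to the projection onto the kernel of $-\Delta$, i.e.\ the constants. That $\bar G$ in \eqref{greendon} is well-defined and satisfies $-\Delta \bar G(\cdot,y)=\delta_y-1/V$ will follow by integrating the heat equation in $t$ and using exponential decay of $p(x,y,t)-1/V$ in $t$, itself a consequence of $\laess(M)>0$.

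In both cases, by Minkowski's integral inequality,
\begin{equation*}
\Bigl(\int_{M\setminus B_R(x)}G(x,y)^{2}\,dy\Bigr)^{1/2}\leq\int_0^\infty \Bigl(\int_{M\setminus B_R(x)}p(x,y,t)^{2}\,dy\Bigr)^{1/2}dt,
\end{equation*}
with the analogous bound for $\bar G$ and $p-1/V$. Two complementary estimates on the integrand would then be used. First, for any $\beta<\laess(M)$, the spectral characterization $\laess(M)=\sup_K\lambda_1(M\setminus K)$ combined with the semigroup identity $\int_M p(x,y,t)^{2}\,dy=p(x,x,2t)$ yields the large-time spectral decay $\int_M p(x,y,t)^{2}\,dy\leq Ce^{-2\beta t}$ for $t$ large. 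Second, the Davies--Gaffney off-diagonal bound $\int_{M\setminus B_R(x)}p(x,y,t)^{2}\,dy\leq Ce^{-R^{2}/(ct)}$ holds on every complete manifold. Interpolating between these two bounds produces a combined estimate of the form $Ce^{-\beta t-R^{2}/(ct)}$; integrating in $t$ and optimizing via the elementary identity $\min_{t>0}(\beta t+R^{2}/(4t))=\sqrt{\beta}R$ yields the decay $e^{-\sqrt{\beta}R}$, whose square is the stated bound.

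The main obstacle will be handling the finitely many discrete eigenvalues in $[0,\laess)$ in case (ii), for which the pure large-time spectral argument at rate $\beta$ does not directly apply. These modes are controlled by the classical Agmon argument: each eigenfunction $\phi_j$ associated with an eigenvalue $\lambda_j<\laess$ decays pointwise at rate $e^{-\sqrt{\laess-\lambda_j}\,r(y)}$ outside a compact set, so its contribution to $\bar G$ has $L^{2}$-decay at infinity at a rate no worse than $e^{-\sqrt{\laess-\beta}\,R}$, which is compatible with the claimed bound after choosing $\beta$ slightly smaller if necessary. Combining this with the essential-spectrum estimate above then yields the desired exponential decay for $\bar G$ in part (ii), closing the argument.
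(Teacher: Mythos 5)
The paper does not prove Lemma~\ref{lemmadonnelly} itself; it cites Donnelly \cite[Section 4]{do} and Li--Wang \cite[Corollary 1.3]{liwa2}. Those references obtain the decay by an Agmon-type estimate applied directly to $G(x,\cdot)$ (resp.\ $\bar G(x,\cdot)$), regarded as a solution of $\Delta u=0$ on the exterior region $M\setminus B_{R_0}(x)$, using only that $\lambda_1\big(M\setminus B_{R_0}(x)\big)>\beta$ for $R_0$ large, which is exactly what $\laess(M)>\beta$ guarantees. Your route through the heat kernel and the spectral decomposition is genuinely different, and as written it does not yield the stated conclusion.

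The gap is in the treatment of the spectrum below $\laess(M)$. The semigroup identity gives $\int_M p(x,y,t)^2\,dy=p(x,x,2t)$, and the bound $p(x,x,2t)\le Ce^{-2\beta t}$ at large time requires $\lambda_1(M)\ge\beta$; under the hypotheses of the lemma one only knows $\laess(M)>\beta$. If there are discrete eigenvalues with $\lambda_1(M)<\beta$, the leading large-time behaviour of $p(x,x,2t)$ is $e^{-2\lambda_1(M)t}$, strictly slower than $e^{-2\beta t}$. You flag this only in part (ii), but it affects part (i) as well, since $\lambda_1(M)<\laess(M)$ is perfectly possible for non-parabolic manifolds.

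Your proposed repair via Agmon estimates on the eigenfunctions does not close the gap. Agmon gives $L^2$-decay of $\phi_j$ at rate $\sqrt{\laess(M)-\lambda_j}$, which tends to $0$ as $\lambda_j\nearrow\laess(M)$. If you split the heat kernel at a spectral threshold $\gamma\in(\beta,\laess(M))$, the high-frequency part decays at rate $\sqrt{\gamma}$ and the finitely many discrete modes below $\gamma$ decay no faster than $\sqrt{\laess(M)-\gamma}$. The combined rate is at best $\min\bigl(\sqrt{\gamma},\sqrt{\laess(M)-\gamma}\bigr)$, which is maximized at $\gamma=\laess(M)/2$ and gives only $\sqrt{\laess(M)/2}$. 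Thus the argument proves the inequality only for $\beta\le\laess(M)/2$, not for every $\beta<\laess(M)$; the phrase ``choosing $\beta$ slightly smaller if necessary'' is not a fix, since the loss is a fixed factor $\sqrt{2}$ in the exponent, not an $o(1)$ perturbation. The reason this is not just a technical inefficiency is that the spectral decomposition destroys a genuine cancellation: on $M\setminus B_{R_0}(x)$ the Green's function solves the \emph{zero}-eigenvalue problem and so decays at rate $\sqrt{\lambda_1(M\setminus B_{R_0}(x))}\approx\sqrt{\laess(M)}$, whereas each $\phi_j$ solves the exterior problem with eigenvalue $\lambda_j>0$ and so decays only at rate $\sqrt{\lambda_1(M\setminus B_{R_0}(x))-\lambda_j}$. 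The slowly decaying discrete pieces are cancelled by matching slow pieces in the essential-spectrum remainder, and that cancellation is lost once the kernel is split. This is exactly why the cited references apply Agmon directly to $G$ rather than decompose it spectrally; if you want to keep a heat-kernel flavour, you would still need to feed in the exterior spectral gap $\lambda_1(M\setminus B_{R_0}(x))>\beta$ and treat $G(x,\cdot)$ as a single harmonic function, not as a superposition of modes.
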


\subsection{Integral estimates on level sets}\label{lsest}

\begin{proposition}\label{lemma3} Let $(M,g)$ be non-parabolic. Choose $A$ as in Lemma \ref{lemma1}. Then there exists a positive constant $C$ such that
\begin{align*}
\int_{\mathcal{L}\left(A,\infty\right)} &G(p,y)\,dy \leq \,  C.
\end{align*}
\end{proposition}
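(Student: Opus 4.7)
My plan is to combine a maximum-principle localization of the superlevel set $\mathcal{L}(A,\infty)$ inside the geodesic unit ball around $p$ with the local integrability of the Green's function at its pole.

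The first step will be to show that $\mathcal{L}(A,\infty)\subseteq B_{1}(p)$. By Lemma \ref{lemma1}, $G(p,y)\leq A$ for every $y\in\partial B_{1}(p)$. Since $(M,g)$ is non-parabolic, \eqref{eq9} expresses $G(p,\cdot)$ as the monotone limit of the Dirichlet Green's functions $G_{R}(p,\cdot)$ on the exhausting balls $B_{R}(p)$, with $G_{R}\leq G$ everywhere and $G_{R}(p,\cdot)\equiv 0$ on $\partial B_{R}(p)$. For each $R>1$, the function $y\mapsto G_{R}(p,y)$ is harmonic on the bounded annular region $B_{R}(p)\setminus\overline{B_{1}(p)}$, vanishes on $\partial B_{R}(p)$, and is bounded above by $A$ on $\partial B_{1}(p)$ (since $G_{R}\leq G\leq A$ there). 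The classical maximum principle on this bounded domain yields $G_{R}(p,y)\leq A$ throughout, and letting $R\to\infty$ gives $G(p,y)\leq A$ on $M\setminus\overline{B_{1}(p)}$. Hence $\mathcal{L}(A,\infty)\subseteq B_{1}(p)$.

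It then suffices to bound $\int_{B_{1}(p)} G(p,y)\,dy$. This is a standard consequence of the local asymptotics of the Green's function at its singularity: via a parametrix construction in normal coordinates around $p$ (or equivalently by comparison with the Euclidean Green's function), one has the local upper bound $G(p,y)\leq C\, r(y)^{2-n}$ for $n\geq 3$ (respectively $G(p,y)\leq C|\log r(y)|$ for $n=2$) in a neighborhood of $p$, while $G(p,\cdot)$ is uniformly bounded on $B_{1}(p)$ away from $p$. Together with the local volume comparison $\vol(B_{r}(p))\leq C r^{n}$ for small $r$, this yields
\[
\int_{B_{1}(p)} G(p,y)\,dy \;\leq\; C\int_{0}^{1} r^{2-n}\cdot r^{n-1}\,dr + C \;<\; \infty,
\]
which, together with the first step, gives the desired estimate.

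The main subtlety lies in the first step. One would like to apply the maximum principle directly on $M\setminus B_{1}(p)$ using the boundary bound $G\leq A$ on $\partial B_{1}(p)$, but this is not legitimate without some a priori decay of $G$ at infinity, which is not available for a general non-parabolic manifold. The approximation by $G_{R}$, which vanishes identically on $\partial B_{R}(p)$, supplies the required decay at the level of the approximants and is the essential ingredient. Once the localization $\mathcal{L}(A,\infty)\subseteq B_{1}(p)$ is established, the remaining integrability near $p$ is classical.
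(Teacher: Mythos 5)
Your argument is correct and is essentially the same as the paper's: you localize $\mathcal{L}(A,\infty)$ inside $B_{1}(p)$ by applying the maximum principle to the exhausting Dirichlet Green's functions $G_{R}(p,\cdot)$ on $B_{R}(p)\setminus \overline{B_{1}(p)}$ and passing to the limit via \eqref{eq9}, exactly as in the paper. The only (immaterial) difference is at the end, where the paper simply invokes $G(p,\cdot)\in L^{1}_{\mathrm{loc}}(M)$ to bound $\int_{B_{1}(p)}G(p,y)\,dy$, while you justify this local integrability explicitly through the parametrix asymptotics of $G$ at the pole.
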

\begin{proof}
We claim that
$$
\mathcal{L}_{p}\left(A,\infty\right) \subset B_{1}(p).
$$
Let $y\in M$ with $\operatorname{dist}(p,y)>1$ and take $j>\operatorname{dist}(p,y)$. Since $G_{j}(p,y)\leq G(p,y)$ and $G_{j}(p,\cdot)\equiv 0$ on $\partial B_{j}(p)$, by Lemma \ref{lemma1}, we have
$$
G_{j}(p,y)\leq A \quad\text{on}\quad \partial\left(B_{j}(p)\setminus B_{1}(p)\right);
$$
note that the right hand side is independent of $y$. Since $y\mapsto G_{j}(p,y)$ is harmonic in $B_{j}(p)\setminus B_{1}(p)$, by maximum principle,
$$
G_{j}(p,y)\leq A \quad\text{in}\quad B_{j}(p)\setminus B_{1}(p).
$$
Sending $j\to\infty$, by \eqref{eq9}, we obtain
$$
G(p,y)\leq A\quad\text{in}\quad M\setminus B_{1}(p).
$$
Therefore the claim follows. Since $G(p, \cdot)\in L^1_{\textrm{loc}}(M)$, we obtain
\begin{equation}\label{in1}
\int_{\mathcal{L}\left(A,\infty\right)} G(p,y)\,dy \leq \int_{B_{1}(p)} G(p,y)\,dy=: C<+\infty.
\end{equation}
\end{proof}

Note that, for any $R>0$, $0<\delta<1$ and $\varepsilon >0$, if $\phi\in C^{\infty}_{c}(M)$ with $\operatorname{supp} \phi \subseteq \left(\mathcal{L}\Big(\frac{\delta\varepsilon}{2},2\varepsilon\Big)\cap B_{R+1}(p)\right)$ and $\phi\equiv 1$ on $\mathcal{L}(\delta\varepsilon,\varepsilon)\cap B_{R}(p)$ then
\begin{align*}
\lambda_{1}\left(\mathcal{L}\Big(\frac{\delta\varepsilon}{2},2\varepsilon\Big)\right)&\int_{\mathcal{L}(\delta\varepsilon,\varepsilon)\cap B_{R}(p)}G(p,y)\,dy \\
&\leq \lambda_{1}\left(\mathcal{L}\Big(\frac{\delta\varepsilon}{2},2\varepsilon\Big)\cap B_{R+1}(p)\right)\int_{M}G(p,y)\phi(y)^{2}\,dy \\
&\leq \int_{M}|\nabla G^{\frac{1}{2}}(p,y)\phi(y)|^{2}\,dy.
\end{align*}
In view of the previous estimate, by the same arguments as in the proof of \cite[Claim 2]{ms} (which do not require that the Ricci curvature is bounded from below by a negative constant) and using Lemma \ref{lemma2} and Lemma \ref{lemmadonnelly} $(i)$, we get the next estimate.

\begin{proposition}\label{claim2} Assume $(M,g)$ be non-parabolic and $\laess(M)>0$. Then, there exists a positive constant $C$ such that, for any $0<\delta<1$ and $\varepsilon >0$,
$$
\lambda_{1}\left(\mathcal{L}\Big(\frac{\delta\varepsilon}{2},2\varepsilon\Big)\right)\int_{\mathcal{L}(\delta \varepsilon, \varepsilon)} G(p,y)\,dy \leq C \left(-\log\delta +1\right).
$$
\end{proposition}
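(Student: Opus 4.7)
The plan is to insert into the Rayleigh-quotient inequality displayed just before the statement the test function
$$
\phi = \psi\bigl(G(p,\cdot)\bigr)\,\eta,
$$
where $\eta\in C^{\infty}_{c}(B_{R+1}(p))$ is a spatial cutoff with $\eta\equiv 1$ on $B_{R}(p)$ and $|\nabla\eta|\le 2$, and $\psi\in C^{\infty}_{c}((\delta\varepsilon/2,2\varepsilon))$ is a logarithmic cutoff in the value of $G$, interpolating as $\psi(s)=\log(2s/(\delta\varepsilon))/\log 2$ on $[\delta\varepsilon/2,\delta\varepsilon]$, $\psi\equiv 1$ on $[\delta\varepsilon,\varepsilon]$, and $\psi(s)=\log(2\varepsilon/s)/\log 2$ on $[\varepsilon,2\varepsilon]$. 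Since $G(p,y)\to+\infty$ as $y\to p$, the set $\{G(p,\cdot)\le 2\varepsilon\}$ avoids a neighborhood of $p$, so $\phi\in C^{\infty}_{c}(M\setminus\{p\})$ and $G(p,\cdot)$ is harmonic on $\supp\phi$.

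Expanding and integrating by parts, and using $\Delta G=0$ on $\supp\phi$ to kill the cross term $\tfrac12\int\langle\nabla G,\nabla\phi^{2}\rangle$,
$$
\int_{M}|\nabla(G^{1/2}\phi)|^{2}\,dV = \int_{M}\frac{|\nabla G|^{2}}{4G}\phi^{2}\,dV + \int_{M}G|\nabla\phi|^{2}\,dV.
$$
Writing $|\nabla\phi|^{2}\le 2\psi'(G)^{2}|\nabla G|^{2}\eta^{2}+2\psi(G)^{2}|\nabla\eta|^{2}$, the coarea formula together with Lemma \ref{lemma2} (which gives $\int_{\mathcal{L}(s)}|\nabla G|\,dA=1$, hence $\int_{M}F(G)|\nabla G|^{2}\,dV=\int_{0}^{\infty}F(s)\,ds$) bounds the first integral and the $\eta^{2}$-part of the third by
$$
\int_{0}^{\infty}\frac{\psi(s)^{2}}{4s}\,ds + 2\int_{0}^{\infty}s\,\psi'(s)^{2}\,ds.
$$
The chosen log-interpolation yields $\int_{\delta\varepsilon}^{\varepsilon}\tfrac{ds}{4s}=\tfrac{-\log\delta}{4}$ on the plateau of $\psi$ and uniformly bounded contributions (of order $1/\log 2$) from the two side slabs in both integrals, giving a total of at most $C(-\log\delta+1)$ independently of $\varepsilon$ and $R$.

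The remaining term $E_{R}:=2\int_{M}G\psi(G)^{2}|\nabla\eta|^{2}\,dV$ is supported in $B_{R+1}(p)\setminus B_{R}(p)$; since $G\ge \delta\varepsilon/2$ on $\supp\psi(G)$, we have $G\psi(G)^{2}\le (2/(\delta\varepsilon))\,G^{2}$, whence by Lemma \ref{lemmadonnelly}$(i)$,
$$
E_{R}\le \frac{16}{\delta\varepsilon}\int_{M\setminus B_{R}(p)}G(p,y)^{2}\,dy\le \frac{C}{\delta\varepsilon}\,e^{-2\sqrt{\beta}R}\to 0\quad\text{as }R\to\infty.
$$
Combining with domain monotonicity $\lambda_{1}(\mathcal{L}(\delta\varepsilon/2,2\varepsilon)\cap B_{R+1}(p))\ge \lambda_{1}(\mathcal{L}(\delta\varepsilon/2,2\varepsilon))$ and passing to the limit $R\to\infty$ (the left-hand integral increases to $\int_{\mathcal{L}(\delta\varepsilon,\varepsilon)}G\,dV$ by monotone convergence) yields the proposition. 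The main obstacle, and the point at which the argument departs from \cite{ms}, is controlling the cutoff error $E_{R}$ without any Ricci lower bound and the associated volume comparison; this is exactly accomplished by Donnelly's $L^{2}$ Green-function decay in Lemma \ref{lemmadonnelly}$(i)$.
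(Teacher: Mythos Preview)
Your proof is correct and follows essentially the same approach as the paper's: the paper explicitly defers to the argument of \cite[Claim 2]{ms} combined with Lemma~\ref{lemma2} (to evaluate the level-set integrals via coarea) and Lemma~\ref{lemmadonnelly}(i) (to kill the spatial cutoff error $E_{R}$ as $R\to\infty$), and this is precisely the computation you carry out. Your identification of the Donnelly $L^{2}$-decay as the replacement for the volume-comparison step that would otherwise require a lower Ricci bound is exactly the point the paper is making.
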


\

\section{Proof of Theorem \ref{teo1}} \label{sec-proofs}

In order to prove Theorem \ref{teo1}, we will show that for every $x\in M$
$$
|u(x)|=\left| \int_{M}G(x,y)f(y)\,dy \right| <+\infty\,.
$$
We divide the proof in two parts, we first consider the case when $(M,g)$ is non-parabolic and then the case when it is parabolic.

\begin{proof}[Proof of Theorem \ref{teo1}] {\bf Case 1:} {\em  $(M,g)$ non-parabolic.} Let $x\in M$ and choose $R=R(x)>0$ large enough so that $x\in B_R (p)$. One has

\begin{align*}
\left|\int_M G(x,y)\,f(y)\, dy\right| &\leq \left| \int_{B_R(p)} G(x,y)\,f(y)\,dy \right|+\left|\int_{M\setminus B_R(p)} G(x,y)\,f(y)\,dy\right|\\
&\leq C_1(x) + \int_{M\setminus B_R(p)} G(x,y)\,|f(y)|\,dy
\end{align*}
since $G(x,\cdot)\in L^1_{\text{loc}}(M)$.  Hence, by Harnack's inequality, we have
\begin{align}\label{eq501}
\left|\int_M G(x,y)\,f(y)\, dy\right| &\leq C_1(x) + C_2(x)\int_{M\setminus B_R(p)} G(p,y)\,|f(y)|\,dy \\ \nonumber
&\leq C_1(x) + C_2(x)\int_{M} G(p,y)\,|f(y)|\,dy \,,
\end{align}
where $C_2(x)$ can be chosen as the constant in the Harnack's inequality for the ball $B_{r(x)+1}(p)$. Then we estimate

\begin{align*}
 \int_{M}G(p,y)\,|f(y)|\,dy   &= \int_{\mathcal{L}\left(0, \,A \right)} G(p,y)\,|f(y)|\,dy  \\
 &\,\,\,+  \int_{\mathcal{L}\left(A ,\infty\right)} G(p,y)\,|f(y)|\,dy \,.
\end{align*}
By Proposition \ref{lemma3}, we get
\begin{align*}
 \int_{M}G(p,y)\,|f(y)|\,dy   &\leq \int_{\mathcal{L}\left(0, A\right)} G(p,y)\,|f(y)|\,dy  + C\,.
\end{align*}

For later use, we introduce the sequence
\[a_m:=\frac{\exp\left(-C_0 \theta( m)\right)}{2 A}\,.\]
To estimate the first term, we observe that, for any $m_{0}\geq 2$, one has
\begin{align*}
\left| \int_{\mathcal{L}\left(0,\,A\right)} G(p,y)f(y)\,dy \right| \leq \left| \int_{\mathcal{L}\left(0, \,a_{m_0}\right)}G(p,y)f(y)\,dy \right| \\
+\left| \int_{\mathcal{L}\left(a_{m_0},\,A \right)}G(p,y)f(y)\,dy \right|\,.
\end{align*}
We need the following lemma, whose proof will be given at the end of this section.
\begin{lemma}\label{lemma5} Choose $A$ as in Lemma \ref{lemma1} and $C_{0}$ as in Lemma \ref{remark200}.
Then, for any $m\in \mathbb N$,
\begin{equation}\label{eq400}\mathcal{L}\left(0, \, 2 a_m\right) \subset M \setminus B_{m-1}(p).
\end{equation}
\end{lemma}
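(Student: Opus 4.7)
The plan is to prove the contrapositive: for every $y\in B_{m-1}(p)$, show that $G(p,y)\geq 2a_m$, so such a $y$ cannot lie in $\mathcal{L}(0,2a_m)$. First I would dispose of the trivial range $m\leq 1$, where $B_{m-1}(p)$ is either empty or reduces to $\{p\}$ (on which $G(p,\cdot)=+\infty$), so the inclusion is automatic. From now on I take $m\geq 2$ and a point $y\in B_{m-1}(p)$.

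I would then split $B_{m-1}(p)$ into the two regions in which the two pointwise lower bounds already established in Section~\ref{sec-est} apply. On $\overline{B_1(p)}$, Remark~\ref{remark100} gives directly $G(p,y)\geq A^{-1}$; since $\exp(-C_0\theta(m))\leq 1$ this already beats $2a_m=A^{-1}\exp(-C_0\theta(m))$. For $y$ with $1<r(y)<m-1$, I would take a minimal unit-speed geodesic $\gamma$ from $p$ to $y$ and let $z_0\in\partial B_1(p)$ be its intersection with the unit sphere. Lemma~\ref{lemma1} gives $G(p,z_0)\geq A^{-1}$, and then Lemma~\ref{remark200} yields
$$ G(p,y)\geq A^{-1}\exp\!\bigl(-C_0\sqrt{K(r(y)+1)}\,r(y)\bigr). $$
Because $R\mapsto K(R)$ is non-decreasing (it is defined in \eqref{eq5} as a supremum over a shell inside an expanding ball) and $r(y)+1<m$, I can estimate $\sqrt{K(r(y)+1)}\,r(y)\leq \sqrt{K(m)}\,m=\theta(m)$, so $G(p,y)\geq A^{-1}\exp(-C_0\theta(m))=2a_m$, as required.

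There is essentially no obstacle here: the sequence $a_m=\exp(-C_0\theta(m))/(2A)$ has been calibrated precisely to match the exponential loss incurred when propagating the lower bound $G(p,\cdot)\geq A^{-1}$ on $\partial B_1(p)$ out to distance $m$ via the gradient estimate underlying Lemma~\ref{remark200}. The only piece of bookkeeping worth double-checking is the monotonicity of $K$ (hence of $\theta$), which is immediate from its definition, and the harmless fact that $r(y)$ and $r(y)+1$ are both bounded by $m$ when $y\in B_{m-1}(p)$.
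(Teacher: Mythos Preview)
Your proof is correct and is essentially the contrapositive of the paper's own argument: the paper starts from $z\in\mathcal{L}(0,2a_m)$, uses Lemma~\ref{remark200} and Lemma~\ref{lemma1} to get $\sqrt{K(r(z)+1)}\,r(z)\geq\theta(m)$, and then invokes monotonicity of $\theta$ to conclude $r(z)\geq m-1$, whereas you start from $y\in B_{m-1}(p)$ and run the same estimates the other way to obtain $G(p,y)\geq 2a_m$. The ingredients (Remark~\ref{remark100}, Lemma~\ref{lemma1}, Lemma~\ref{remark200}, and the monotonicity of $K$) and the logical structure are identical.
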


Fix any $m_0\in \mathbb N, m_0\geq 2$. By the same arguments as in the proof of Proposition \ref{lemma3}, we get
$$
\mathcal{L}\left(a_{m_0},\, A \right) \subset B_{l}(p)
$$
for some $l=l(m_0)>0,$ and
\begin{align}\label{eq200}
\left| \int_{\mathcal{L}\left(a_{m_0},\,A \right)}G(p,y)f(y)\,dy \right|\leq C\,.
\end{align}

Now, for any $m\geq m_{0}$, applying Lemma \ref{claim2} with $\varepsilon:=a_m$ and $\delta:=\frac{a_{m+1}}{a_m}$, we obtain
\begin{align}\label{eq201}
&\Big| \int_{\mathcal{L}\left(0, \,a_{m_0}\right)}G(p,y)f(y)\,dy \Big| \leq \sum_{m\geq m_{0}}\left| \int_{\mathcal{L}\left(a_{m+1}, \,a_m\right)}G(p,y)f(y)\,dy \right| \\\nonumber
&\leq C \sum_{m\geq m_{0}} \frac{\theta(m+1)-\theta(m)}{\lambda_{1}\left(\mathcal{L}\left(\frac{1}{2}a_{m+1}, \,2a_m\right)\right)}\sup_{\mathcal{L}\left(a_{m+1},a_m\right)}|f|\\\nonumber
&\leq C \sum_{m\geq m_{0}} \frac{\theta(m+1)-\theta(m)}{\lambda_{1}\left(\mathcal{L}\left(\frac{1}{2}a_{m+1}, \,2a_m\right)\right)}\sup_{\mathcal{L}\left(\frac 1 2a_{m+1}, 2 a_m\right)}|f|\,.
\end{align}
Using the assumptions of Theorem \ref{teo1}, from inequality \eqref{eq201}, Lemma \ref{lemma5} and  the monotonicity of $\lambda_{1}$ with respect to the inclusion we obtain
\begin{align}\label{est2}
\Big| &\int_{\mathcal{L}\big(0, a_{m_0}\big)}G(p,y)f(y)\,dy \Big| \\\nonumber
&\leq
C \sum_{m\geq m_{0}} \frac{\theta(m+1)-\theta(m)}{\lambda_{1}\left(M\setminus B_{m-1}(p)\right)}\sup_{M\setminus B_{m-1}(p)}|f|\\\nonumber
&\leq C \sum_{m\geq m_{0}} \frac{\theta(m+1)-\theta(m)}{\lambda_{1}\left(M\setminus B_{m-1}(p)\right)\zeta\left(m-1\right)}\\\nonumber
&\leq C \sum_m^\infty\frac{\theta(m+1)-\theta(m)}{\lambda_{1}\left(M\setminus B_{m-1}(p)\right)\zeta\left(m-1\right)}<\infty.
\end{align}

Putting together \eqref{eq200}, \eqref{est2} we obtain the thesis of Theorem \ref{teo1}, in this case.

\

\noindent {\bf Case 2:} {\em  $(M,g)$ parabolic.}

\

Since $\laess(M)>0$, there exists $\bar{R}>0$ such that $\lambda_{1}(M\setminus B_{\bar{R}}(p))>0$. Fix any $x\in M$. We can choose $R_{0}\geq r(x)+1$ such that $B_{R_{0}}(x)\supset B_{\bar{R}}(p)$. So,
$$
\lambda_{1}\left(M\setminus B_{R_{0}}(x)\right) \geq \lambda_{1}\left(M\setminus B_{\bar{R}}(p)\right)>0.
$$
We have that
$$
M\setminus B_{R_{0}}(x) = \bigcup_{i=1}^{N} E_{i},
$$
where each $E_{i}$ is an end with respect to $B_{R_{0}}(x)$. Note that every end $E_{i}$ is parabolic. In fact, if at least one end $E_{i}$ is non-parabolic, then $(M,g)$ is non-parabolic (see \cite{li} for a nice overview), but we are in the case that $(M,g)$ is parabolic. Since every $E_{i}$ is parabolic, every $E_{i}$ has finite volume (see \cite{csz, liwa2}), so obviously $(M,g)$ has finite volume. Let $\bar{G}(x,y)$ be the sign changing Green's function defined in \eqref{greendon}. 

Let $\varphi\in C^\infty_c(M)$ a smooth compactly supported function with $\int_M \varphi =1$. Let $\psi\in C^\infty(M)$ be a solution of
$$
-\Delta \psi = \varphi
$$
and let $\bar{f}:=f-\alpha \varphi$, with $\alpha$ fixed so that $\int_M \bar{f}=0$. Then
Hence,
\begin{align*}
\left| \int_{M}\bar{G}(x,y)\bar{f}(y)\,dy\right| &\leq \int_{M\setminus B_{R_{0}}(x)}|\bar{G}(x,y)||\bar{f}(y)|\,dy + \int_{B_{R_{0}}(x)}|\bar{G}(x,y)||\bar{f}(y)|\,dy \\
&\leq \int_{M\setminus B_{R_{0}}(x)}|\bar{G}(x,y)||\bar{f}(y)|\,dy+  C,
\end{align*}
since $\bar{G}(x,\cdot)\in L^{1}_{\rm{loc}}(M)$ and $\bar{f}$ is bounded. To estimate the first integral we recall that, by Lemma \ref{lemmadonnelly} $(ii)$ we have
\begin{align*}
\int_{M\setminus B_{R_{0}}(x)}|\bar{G}(x,y)||\bar{f}(y)|\,dy &\leq \left(\int_{M\setminus B_{R_{0}}(x)}|\bar{G}(x,y)|^{2}\,dy\right)^{\frac{1}{2}}\left(\int_{M\setminus B_{R_{0}}(x)}|\bar{f}(y)|^{2}\,dy\right)^{\frac{1}{2}}\\
&\leq C\, \vol\left(M\setminus B_{R_{0}}(x)\right)^{\frac{1}{2}}\Vert \bar{f} \Vert_{L^{\infty}(M)} \exp\left(-\sqrt{\beta}\,R_{0}\right)<\infty
\end{align*}
for every $0<\beta<\lambda_{1}\left(M\setminus B_{R_{0}}(x)\right)$. Hence
$$
\left| \int_{M}\bar{G}(x,y)\bar{f}(y)\,dy\right| < \infty.
$$
Then 
$$
\bar{u}(x):=\int_{M}\bar{G}(x,y)\bar{f}(y)\,dy
$$
solves $-\Delta \bar{u}=\bar{f}$, since $\bar{f}$ has zero average. Finally, the function
$$
u:=\bar{u}+\alpha \psi
$$
solves $-\Delta u = \bar{f}+\alpha\varphi = f$ and this concludes the proof of Theorem \ref{teo1}.

\end{proof}

\

\begin{proof}[Proof of Corollary \ref{cor-1}] In view of our assumptions, it follows that, for every $j_0\in \mathbb N$ there exists a constant $C=C(j_{0})$ such that, for any $j\geq j_{0}>1$, $\theta(j) =C\,j (2+j)^{\frac{\gamma}{2}}$. In particular $
\theta(j+1)-\theta(j)\sim C' j^{\frac{\gamma}{2}}$ as $j\to \infty$.  Moreover $\lim_{j\to\infty}\lambda_{1}\big(M\setminus B_{j}(p)\big)=\laess(M)>0$.  Thus we have
$$
b_{j}:=\frac{\theta(j+1)-\theta(j)}{\lambda_{1}\left(M\setminus B_{j}(p)\right)\zeta(j)} \leq \frac{C}{j^{1+\varepsilon}}.
$$
So the series $\sum_{j} b_{j}$ converges and the thesis follows from Theorem \ref{teo1}.
\end{proof}

\

\begin{proof}[Proof of Lemma \ref{lemma5}]
The choice of $m_{0}$ and Remark \ref{remark100} imply
\begin{equation}\label{eq300}
\mathcal{L}\left(0, \,2 a_{m_0}\right) \subset \mathcal{L}\left(0, \,A^{-1}\right)\subset M \setminus B_{1}(p).
\end{equation}
Let $z$ and $z_{0}$ be as in Lemma \ref{remark200}. Then
$$
G(p,z) \geq G(p,z_{0}) \exp \left(-C_{0}\sqrt{K(r(z)+1)}r(z)\right).
$$
From Lemma \ref{lemma1} we obtain
$$
G(p,z_0)\geq A^{-1}.
$$
In particular, by \eqref{eq300}, if $z\in \mathcal{L}\left(0, \,2 a_m\right)$, then
$$
\sqrt{K(r(z)+1)}r(z) \geq \theta(m).
$$
So,
\[ \theta(m)\leq \theta(r(z)+1))\,.\]
This yields, for every $m\in \mathbb N, m\geq 1$,
\[r(z) \geq m-1\,. \]
This concludes the proof of the lemma.
\end{proof}

\

\section{Cartan-Hadamard and model manifolds} \label{sec-ex}

We consider Cartan-Hadamard manifolds, i.e.~complete, non-compact, simply connected Riemannian manifolds with non-positive
sectional curvatures everywhere. Observe that on Cartan-Hadamard manifolds the cut locus of any point $p$ is empty.
Hence, for any $x\in M\setminus \{p\}$ one can define its polar coordinates with pole at $p$, namely $r(x) = \operatorname{dist}(x, p)$ and $\theta\in \mathbb S^{n-1}$. We have
\begin{equation*}
\textrm{meas}\big(\partial B_{r}(p)\big)\,=\, \int_{\mathbb S^{n-1}}A(r, \theta) \, d\theta^1d \theta^2 \ldots d\theta^{n-1}\,,
\end{equation*}
for a specific positive function $A$ which is related to the metric tensor, \cite[Sect. 3]{gri1}. Moreover, it is direct to see that the Laplace-Beltrami operator in polar coordinates has the form
\begin{equation*}
\Delta \,=\, \frac{\partial^2}{\partial r^2} + m(r, \theta) \, \frac{\partial}{\partial r} + \Delta_{\theta} \, ,
\end{equation*}
where $m(r, \theta):=\frac{\partial }{\partial r}(\log A)$ and $ \Delta_{\theta} $ is the Laplace-Beltrami operator on $\partial B_{r}(p)$.  We have
$$
m(r,\theta) =\Delta r(x).
$$

Let $$\mathcal A:=\left\{f\in C^\infty((0,\infty))\cap C^1([0,\infty)): \, f'(0)=1, \, f(0)=0, \, f>0 \ \textrm{in}\;\, (0,\infty)\right\} .$$ We say that $M$ is a rotationally symmetric manifold or a model manifold if the Riemannian metric is given by
\begin{equation*}\label{e2}
ds^2 \,=\, dr^2+\varphi(r)^2 \, d\theta^2,
\end{equation*}
where $d\theta^2$ is the standard metric on $\mathbb S^{n-1}$ and $\varphi\in \mathcal A$. In this case,
\begin{equation*}
\Delta \,=\, \frac{\partial^2}{\partial r^2} + (n-1) \, \frac{\varphi'}{\varphi} \, \frac{\partial}{\partial r} + \frac1{\varphi^2} \, \Delta_{\mathbb S^{n-1}} \, .
\end{equation*}
Note that $\varphi(r)=r$ corresponds to $M=\mathbb R^n$, while $\varphi(r)=\sinh r$ corresponds to $ M=\mathbb H^n $, namely the $n$-dimensional hyperbolic space. The Ricci curvature in the radial direction is given by
$$
\ricc( \nabla r, \nabla r) (x) = -(n-1)\frac{\varphi''(r(x))}{\varphi(r(x))}.
$$
Concerning the first eigenvalue of the Laplacian $\lambda_{1}(M\setminus B_{R}(p))$ we have the following lower bound.
\begin{lemma}\label{lemma-barta}
Let $(M,g)$ be a Cartan-Hadamard manifold with
$$
\ricc( \nabla r, \nabla r) (x)\leq -C\big(1+r(x)\big)^{\gamma}
$$
for some $C,\gamma>0$ and any $x\in M\setminus\{p\}$. Then
$$
\lambda_{1}\big(M\setminus B_{R}(p)\big) \geq C'R^{\gamma}
$$
for some $C'>0$.
\end{lemma}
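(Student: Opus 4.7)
The plan is to use Barta's inequality
\[
\lambda_{1}(\Omega)\,\geq\, \inf_{\Omega}\frac{-\Delta h}{h},
\]
valid for any positive $h\in C^{2}(\Omega)$ on any open $\Omega\subset M$, applied with $\Omega=M\setminus B_{R}(p)$ and the radial test function
\[
h(x)\,=\, e^{-\beta(r(x))},\qquad \beta(r)\,:=\,\tilde\alpha\,\tfrac{2}{2+\gamma}(1+r)^{(2+\gamma)/2},
\]
for a constant $\tilde\alpha>0$ to be chosen. Since a Cartan-Hadamard manifold has empty cut locus, $r$ is smooth on $M\setminus\{p\}$, hence so is $h$ on $M\setminus B_{R}(p)$. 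Using $|\nabla r|=1$, a direct computation gives
\[
\frac{-\Delta h}{h}\,=\,-(\beta'(r))^{2}+\beta''(r)+\beta'(r)\,\Delta r,
\]
so the whole argument reduces to a sharp pointwise lower bound on $\Delta r$.

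The first step is to show that $\Delta r(x)\geq \alpha(1+r(x))^{\gamma/2}$ for all $x$ with $r(x)\geq s_{0}$, for some $\alpha\in(0,\sqrt{C})$ and some $s_{0}=s_{0}(C,\gamma,n)>0$. Along any unit-speed radial geodesic $s\mapsto \sigma(s)$ issuing from $p$, the Bochner identity gives the Riccati relation
\[
\tfrac{d}{ds}\Delta r\,=\,-|\mathrm{Hess}\,r|^{2}-\ricc(\dot\sigma,\dot\sigma).
\]
On a Cartan-Hadamard manifold, Hessian comparison with the Euclidean model forces the eigenvalues $\lambda_{i}$ of $\mathrm{Hess}\,r$ (in the directions orthogonal to $\nabla r$) to be nonnegative, so $|\mathrm{Hess}\,r|^{2}=\sum \lambda_{i}^{2}\leq (\sum \lambda_{i})^{2}=(\Delta r)^{2}$. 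Combined with the hypothesis $-\ricc(\nabla r,\nabla r)\geq C(1+r)^{\gamma}$ this yields
\[
\tfrac{d}{ds}\Delta r\,\geq\,-(\Delta r)^{2}+C(1+s)^{\gamma}.
\]
The function $u(s):=\alpha(1+s)^{\gamma/2}$ with $\alpha<\sqrt{C}$ is a strict subsolution on $[s_{0},\infty)$ for $s_{0}$ large enough, because $(C-\alpha^{2})(1+s)^{\gamma}$ strictly dominates $u'(s)=\frac{\alpha\gamma}{2}(1+s)^{\gamma/2-1}$ once $s\geq s_{0}$. Since $\Delta r\sim (n-1)/s$ as $s\to 0^{+}$, at $s=s_{0}$ one has $\Delta r(\sigma(s_{0}))>u(s_{0})$, and a first-crossing argument (using the strict subsolution property of $u$) rules out any first $s_{1}>s_{0}$ with $\Delta r(\sigma(s_{1}))=u(s_{1})$. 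Hence $\Delta r\geq u$ along every such geodesic, and thus pointwise on $M\setminus B_{s_{0}}(p)$.

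With the lower bound for $\Delta r$ in hand, and noting that $\beta'(r)=\tilde\alpha(1+r)^{\gamma/2}\geq 0$, $(\beta'(r))^{2}=\tilde\alpha^{2}(1+r)^{\gamma}$, and $\beta''(r)=\frac{\tilde\alpha\gamma}{2}(1+r)^{\gamma/2-1}\geq 0$, I obtain, on $M\setminus B_{R}(p)$ with $R\geq s_{0}$,
\[
\frac{-\Delta h}{h}\,\geq\, -\tilde\alpha^{2}(1+r)^{\gamma}+\tilde\alpha\,\alpha\,(1+r)^{\gamma}\,=\,\tilde\alpha(\alpha-\tilde\alpha)(1+r)^{\gamma}\,\geq\, \tilde\alpha(\alpha-\tilde\alpha)\,R^{\gamma}.
\]
Picking $\tilde\alpha\in(0,\alpha)$ makes the coefficient strictly positive, and Barta's inequality then gives $\lambda_{1}(M\setminus B_{R}(p))\geq C'R^{\gamma}$ for all $R\geq s_{0}$. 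The remaining case $R<s_{0}$ is handled by the monotonicity of $\lambda_{1}$ under inclusion of domains, after possibly enlarging $C'$.

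The main technical point I anticipate is the Riccati comparison yielding $\Delta r\geq \alpha(1+r)^{\gamma/2}$: the $1/s$ singularity of $\Delta r$ at the origin actually works in our favour (it keeps $\Delta r$ above $u$ at some large-enough starting point $s_{0}$), but one must be careful to ensure $u$ is a \emph{strict} subsolution on the entire range $[s_{0},\infty)$ so that the first-crossing argument goes through without a parasitic equality. Once this is secured, the rest is plugging into the Barta test function and balancing constants.
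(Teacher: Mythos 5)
Your overall route is essentially the paper's: the paper invokes the strong Laplacian comparison on Cartan--Hadamard manifolds (valid under a radial Ricci upper bound, \cite[Theorem 2.15]{xin}) with $\varphi(r)=\exp\big(Br^{1+\gamma/2}\big)$ to get $\Delta r\geq \varphi'/\varphi\geq C\,r^{\gamma/2}$, and then a Barta-type argument; you reprove that comparison by hand via the radial Riccati equation (using that $\mathrm{Hess}\,r\geq 0$ on a Cartan--Hadamard manifold, so $|\mathrm{Hess}\,r|^2\leq(\Delta r)^2$) and then run Barta with the weight $e^{-\beta(r)}$. That is fine in spirit, but as written your comparison step has a genuine gap: you take $s_0$ \emph{large} so that $u(s)=\alpha(1+s)^{\gamma/2}$ is a strict subsolution on $[s_0,\infty)$, and then justify the initial inequality $\Delta r(\sigma(s_0))>u(s_0)$ by ``$\Delta r\sim (n-1)/s$ as $s\to 0^+$''. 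This is a non sequitur: the small-$s$ asymptotics say nothing at a large $s_0$, where the only a priori Cartan--Hadamard bound is $\Delta r\geq (n-1)/s_0$, which is far below $\alpha(1+s_0)^{\gamma/2}$; asserting more at $s_0$ is circular. The fix is routine and should be made explicit. Either (i) choose $\alpha$ small enough that $2(C-\alpha^2)>\alpha\gamma$, so that
$$
u'(s)=\tfrac{\alpha\gamma}{2}(1+s)^{\gamma/2-1}<(C-\alpha^2)(1+s)^{\gamma}
$$
holds for \emph{all} $s\geq 0$; then $u$ is a strict subsolution from the origin, where $\Delta r\to+\infty$ while $u\to\alpha$, and your no-downward-crossing argument gives $\Delta r\geq u$ on all of $(0,\infty)$. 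Or (ii) keep your $s_0$ but add a catch-up estimate: wherever $\Delta r<u$ one has $\tfrac{d}{ds}\Delta r\geq (C-\alpha^2)(1+s)^{\gamma}$, which forces $\Delta r$ (nonnegative on a Cartan--Hadamard manifold) to overtake $\alpha(1+s)^{\gamma/2}$ within a uniformly bounded additional time, after which the crossing argument keeps it above; this yields the bound for all $s\geq s_1$ with $s_1$ independent of the geodesic, which is what the Barta step needs.

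A second, smaller slip: for $R<s_0$ you appeal to ``monotonicity of $\lambda_1$ under inclusion'', but it points the wrong way, since $M\setminus B_{R}(p)\supset M\setminus B_{s_0}(p)$ gives $\lambda_1(M\setminus B_R(p))\leq \lambda_1(M\setminus B_{s_0}(p))$. What you actually need there is a uniform positive lower bound, e.g. $\lambda_1(M\setminus B_R(p))\geq \lambda_1(M)>0$, where positivity of $\lambda_1(M)$ follows from $\Delta r\geq c_0>0$ on $M\setminus\{p\}$ (combine $\Delta r\geq (n-1)/r$ for $r\leq s_1$ with the Riccati bound for $r\geq s_1$) and the same Barta argument; then decrease (not enlarge) $C'$ so that $C'R^{\gamma}\leq \lambda_1(M)$ for $R\leq s_1$. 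This case is peripheral — the paper's own proof also only really covers large $R$, which is all that is used in Corollary \ref{cor-2} — but the statement as you wrote it is incorrect. With these two repairs your argument is complete and is a self-contained version of the paper's proof, trading the citation of Xin's comparison theorem for the explicit Riccati computation.
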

\begin{proof} We can find $\varphi\in \mathcal{A}$ such that $\varphi(r)=\exp\big(B\,r^{1+\frac{\gamma}{2}}\big)$ for $r>1$, $B>0$ small  and $\ricc( \nabla r, \nabla r) (x) \leq -\frac{\varphi''(r(x))}{\varphi(r(x))}$. By the Laplacian comparison in a strong form, which is valid only on Cartan-Hadamard manifolds (see \cite[Theorem 2.15]{xin}), one has
$$
\Delta r(x) \geq \frac{\varphi'(r(x))}{\varphi(r(x))}.
$$
Hence
$$
\Delta r(x) \geq C r(x)^{\frac{\gamma}{2}} \geq C R^{\frac{\gamma}{2}} \quad\quad\hbox{in } M \setminus B_{R}(p).
$$
By a Barta-type argument (see e.g. \cite[Theorem 11.17]{gri2}),
\[\lambda_1(M\setminus B_R(p)) \geq [C R^{\frac{\gamma}{2}}]^2 \quad \textrm{in}\;\; M\setminus B_R(p)\,. \]
Thus, the thesis follows.
\end{proof}

\begin{proof}[Proof of Corollary \ref{cor-2}] By assumptions, it follows that, for every $j_0\in \mathbb N$, there exists a constant $C=C(j_{0})$ such that, for any $j\geq j_{0}>1$, $\theta(j) =C\,j (2+j)^{\frac{\gamma_{1}}{2}}$ and $
\theta(j+1)-\theta(j)\sim C' j^{\frac{\gamma_{1}}{2}}$ as $j\to\infty$.  Moreover, by Lemma \ref{lemma-barta} have $
\lambda_{1}\left(M\setminus B_{j-1}(p)\right) \geq C \,j^{\gamma_{2}}$ (Barta-type estimate). In particular $\laess(M)>0$. Thus we have
$$
b_{j}:=\frac{\theta(j+1)-\theta(j)}{\lambda_{1}\left(M\setminus B_{j-1}(p)\right)\zeta(j-1)} \leq \frac{C\,j^{\frac{\gamma_{1}}{2}}}{j^{\gamma_{2}+1+\frac{\gamma_{1}}{2}-\gamma_{2}+\varepsilon}}= \frac{C}{j^{1+\varepsilon}}.
$$
So the series $\sum_{j} b_{j}$ converges and the thesis follows from Theorem \ref{teo1}.
\end{proof}

In particular, in Corollary \ref{cor-2}, if $(M,g)$ is a model manifold with $\varphi\in\mathcal{A}$,
$$
\varphi(r)=\exp\left(r^{1+\frac{\gamma}{2}}\right), \quad \hbox{for } r>1,
$$
for some $\gamma>0$, $\laess(M)>0$, the assumptions on the Ricci curvature are satisfied with $\gamma_{1}=\gamma_{2}=\gamma$ and the hypothesis on $f$ reads
$$
|f(x)|\leq \frac{C}{\big(1+r(x)\big)^{\alpha}}
$$
for some
$$
\alpha>1-\frac{\gamma}{2}.
$$
These remark allows us to discuss the sharpness of the assumptions in Theorem \ref{teo1}. Indeed we show that the previous condition on $f$ is also necessary for the existence of a solution to the Poisson equation on the model manifold. In fact, as it has been shown in Section \ref{sec-proofs},
$$
\int_{M}G(x,y)f(y)\,dy<\infty \quad\quad\hbox{for any }\,  x \in M  \quad \Longleftrightarrow \quad \int_{M}G(p,y)f(y) \,dy<\infty .
$$
Hence a solution of $-\Delta u = f$ in $M$ exists if and only if
$$
u(p)=\int_{0}^{\infty}\left(\int_{r}^{\infty}\frac{1}{\varphi(t)^{n-1}}dt\right)f(r)\,\varphi(r)^{n-1}\,dr <\infty.
$$
With our choice of $\varphi$, by the change of variable $s=t^{1+\frac{\gamma}{2}}$, it is easily seen that, for any $r>0$ sufficiently large
$$
\int_{r}^{\infty}\frac{1}{\varphi(t)^{n-1}}dt \sim C r^{-\frac{\gamma}{2}}\exp\left(-(n-1)r^{1+\frac{\gamma}{2}}\right).
$$
Hence
\begin{align*}
 \frac 1{C} \int_{1}^{\infty} & r^{-\frac{\gamma}{2}}\exp\left(-(n-1)r^{1+\frac{\gamma}{2}}\right) \frac{1}{\big(1+r(x)\big)^{\alpha}}\exp\left((n-1)r^{1+\frac{\gamma}{2}}\right)\,dr  \leq |u(p)|\\&\leq C \int_{1}^{\infty} r^{-\frac{\gamma}{2}}\exp\left(-(n-1)r^{1+\frac{\gamma}{2}}\right) \frac{1}{\big(1+r(x)\big)^{\alpha}}\exp\left((n-1)r^{1+\frac{\gamma}{2}}\right)\,dr
\end{align*}
Therefore,
\begin{align*}
\frac 1 C\int_{1}^{\infty}\frac{1}{r^{\alpha+\frac{\gamma}{2}}}\,dr &\leq |u(p)|\leq C \int_{1}^{\infty}\frac{1}{r^{\alpha+\frac{\gamma}{2}}}\,dr\,.
\end{align*}
This yields that
$$|u(p)|<\infty \quad \textrm{
if and only if} \quad
\alpha>1-\frac{\gamma}{2}.
$$

\

\

\begin{ackn} The authors are members of the Gruppo Nazionale per l'Analisi Matematica, la Probabilit\`{a} e le loro Applicazioni (GNAMPA) of the Istituto Nazionale di Alta Matematica (INdAM). The first two authors are supported by the PRIN-2015KB9WPT Grant ``Variational methods, with applications to problems in mathematical physics and geometry''. 

The authors would like to express their gratitude to the referees for their very useful comments and suggestions.
\end{ackn}

\

\

\


\begin{thebibliography}{999}

\bibitem{cal} E. Calabi, {\em An extension of E. Hopf's maximum principle with application to Riemannian geometry}, Duke Math. J. 25 (1958) 45--46.

\bibitem{csz} H. Cao, Y. Shen, S. Zhu, {\em The structure of stable minimal hypersurfaces in $\mathbb{R}^{n+1}$}, Math. Res. Let. 4 (1997) 637--644.

\bibitem{cy}  S.Y. Cheng, S.T. Yau, {\em Differential equations on Riemannian manifolds and their geometric applications}, Comm. Pure Appl. Math. 28 (1975), 333--354.

\bibitem{do} H. Donnelly, {\em Essential spectrum and heat kernel}, J. Funct. Anal. 75 (1987), no. 2, 362--381.

\bibitem{gri1} A. Grigor'yan, {\em Analytic and geometric background of recurrence and non-explosion of the Brownian motion on Riemannian
manifolds}, Bull. Amer. Math.~Soc. {\bf 36} (1999), 135--249.

\bibitem{gri2} A. Grigor'yan, {\em Heat Kernel and Analysis on Manifolds}, AMS/IP Studies in Advanced Mathematics, 47, American Mathematical Society, Providence, RI; International Press, Boston, MA, 2009.

\bibitem{li}  P. Li, {\em Curvature and function theory on Riemannian manifolds}, Surv. Differ. Geom., 7, Int. Press, Somerville, MA, 2000, 375--432.

\bibitem{liwa2}  P. Li, J. Wang, {\em Complete manifolds with
positive spectrum}, J. Diff. Geom. 58 (2001), no.3, 501--534.

\bibitem{liwa1}  P. Li, J. Wang, {\em Weighted Poincar\'{e}
inequality and rigidity of complete manifolds}, Ann. Sc. Ec. Norm. Sup., 4e
serie, t. 39 (2006), 921--982.

\bibitem{ly}  P. Li, S.T. Yau, {\em On the parabolic kernel of the Schr\"{o}dinger operator}, Acta Math. 156 (1986), 139--168.

\bibitem{mal} B. Malgrange, {\em Existence et approximation des solutions de equations aux derivees partielles et des equations de convolution}, Annales de l'Inst. Fourier 6 (1955), 271--355.

\bibitem{masrigset} P. Mastrolia, M. Rigoli, A. G. Setti, {\em Yamabe-type equations on complete noncompact manifolds}, Springer 2012. MR2962687 Zbl 1323.53004.

\bibitem{ms} O. Munteanu, N. Sesum, {\em
The Poisson equation on complete manifolds with positive spectrum and applications}
Adv. Math. 223 (2010), no. 1, 198--219.

\bibitem{msw} O. Munteanu, C.-J. A. Sung, J. Wang, {\em Poisson equation on complete manifolds}, preprint 2017,	arXiv:1701.02865.

\bibitem{ni}  L. Ni, {\em The Poisson equation and Hermitian
Einstein metrics on Holomorphic vector bundles over complete noncompact
K\"{a}hler manifolds}, Indiana Univ. Math. Jour. 51 (2002), 670--703.


\bibitem{nst}  L. Ni, Y. Shi, L.F. Tam, {\em Poisson
equation, Poincar\'{e}-Lelong equation and curvature decay on complete
K\"{a}hler manifolds}, J. Differential Geometry 57 (2001), 339--388.

\bibitem{str} R. Strichartz, {\em Analysis of the Laplacian on the
complete Riemannian manifold}, J. Funct. Anal. 52 (1983), 48--79.

\bibitem{tay} M. Taylor, {\em
Poisson equations, uniformization, and geometrical optics}, Analysis, complex geometry, and mathematical physics: in honor of Duong H. Phong, 241--260,
Contemp. Math., 644.

\bibitem{xin}  Y.L. Xin, {\em Geometry of harmonic maps}, Birkh\"{a}user, 1996.


\end{thebibliography}
\end{document}